%% file: main.tex
\UseRawInputEncoding
\documentclass{amsart}
\input{preamble.tex}

\usepackage{xparse}
\makeatletter
\RenewDocumentCommand{\title}{om}{%
   \IfNoValueTF{#1}
     {\gdef\shorttitle{Non-vanishing of Dirichlet L-functions at the central point}}
     {\gdef\shorttitle{#1}}%
   \gdef\@title{#2}%
}
\makeatother

\title{Non-Vanishing of Dirichlet L-functions at the central point with restricted root number} 
\author{Adam Earnst}
\date{March 23, 2026}
\address{Department of Mathematics\\
         Rutgers University,
         Piscataway, NJ 08854\\
         USA}
\email{adam.earnst@rutgers.edu}

\begin{document}

\begin{abstract}
    We prove asymptotics for mollified first and second moments of subfamilies of Dirichlet $L$-functions given by shrinking angular restrictions on the root number.
    Using these moments, we prove that for even primitive characters with prime conductor $q$, a positive proportion of the central values $L(1/2,\chi)$ do not vanish as $q\to\infty$.
\end{abstract}

\maketitle

\tableofcontents

\section{Introduction}
It is a well-known conjecture, often attributed to Chowla, that the value of a primitive Dirichlet $L$-function,
\begin{equation*}
    L(s,\chi)=\sum_{n=1}^\infty \frac{\chi(n)}{n^s},
\end{equation*}
at the central point $s=1/2$ is always nonzero. There is an abundance of literature on the non-vanishing of $L$-functions at the central point. Most of these results rely on computing mollified moments of Dirichlet $L$-functions. 
Balasubramanian and Murty \cite{BM92} used mollifiers to compute that a positive proportion $(\geq 0.04)$ of Dirichlet $L$-functions with modulus $q$ vanish at the central point. Iwaniec and Sarnak \cite{IS99} improved on this result using simpler methods, obtaining a proportion of non-vanishing of $1/3$. The best known result for general moduli is a proportion of $0.3411$, due to Bui \cite{Bui12}. For prime moduli, Khan, Mili\'cevi\'c, and Ngo \cite{KMN22} proved that $5/13$ of primitive Dirichlet $L$-functions do not vanish at the central point.

For a primitive even Dirichlet character, we define the root number as the normalized Gauss sum,
\begin{equation*}
    \epsilon(\chi)=\frac{1}{\sqrt{q}}\sum_{t\shortmod q}\chi(t)e\left(\frac{t}{q}\right),
\end{equation*}
which has modulus $1$. We define the angle of the root number, $\theta_\chi$, to be the unique value in $\R/\Z$ satisfying $\epsilon(\chi)=e(\theta_\chi)$. The root number can encode information about the behavior of an $L$-function at the central point. An example is that a self-dual $L$-function with root number $-1$ trivially vanishes at the central point due to its functional equation.

Due to this connection between the root number $\epsilon(\chi)$ and the central $L$-value $L(1/2,\chi)$, we consider the problem of non-vanishing of Dirichlet $L$-functions subject to angular restrictions of the root number. In particular, we use the mollifier technique to obtain results on non-vanishing of the central value of $L$-functions in this restricted family. 
\subsection{Statement of Results}
Let $q$ be a prime and let $I_q\subset \R/\Z$ be an interval of length $\mu(I_q)$. Then, for $\epsilon>0$, we define
\begin{equation*}
    N(q,I_q,\epsilon)=\#\left\{\chi\shortmod{q}\mid \chi(-1)=1,\ \theta_\chi\in I_q,\ |L(1/2,\chi)|>\epsilon\cdot\mu(I_q)(\log q)^{-1/2} \right\}.
\end{equation*}
\begin{thm}\label{thm: positive proportion}
    Let $\{I_q\}$ be a sequence of intervals varying over $q$ prime.
    Suppose that $\mu(I_q)\geq C q^{-\eta}$ for some $0\leq\eta<1/480$. Then, for $q$ sufficiently large depending on $C$, $\eta$, and $\epsilon$,
    \begin{equation*}
        \frac{1}{\mu(I_q)\varphi^+(q)}N(q,I_q,\epsilon)\geq (1-\epsilon)c(\eta),
    \end{equation*}
    where $\varphi^+(q)$ denotes the number of primitive even characters modulo $q$ and $c(\eta)$ is positive.
\end{thm}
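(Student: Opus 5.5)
Let $h$ be a smooth nonnegative weight on $\R/\Z$ that is supported in $I_q$, is at most $1$, and equals $1$ on all of $I_q$ except small end pieces, so that $\int h \ge (1-\epsilon)\mu(I_q)$. We also need $h$ to have well-controlled Fourier coefficients at scale $\mu(I_q)^{-1}$: we want $\hat h(k)$ to be negligible once $|k|\gg \mu(I_q)^{-1}q^{\delta}$. With $M(\chi)=\sum_{m\le M}\frac{y_m\chi(m)}{\sqrt m}$ a mollifier with $M=q^{\vartheta}$, we form the weighted mollified moments
\[
S_1=\sum_{\chi}^{+} h(\theta_\chi)L(1/2,\chi)M(\chi),\qquad S_2=\sum_{\chi}^{+} h(\theta_\chi)|L(1/2,\chi)M(\chi)|^2 .
\]
Here $\sum^+$ denotes the sum over primitive even characters mod $q$. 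The first step is to expand $h(\theta_\chi)=\sum_k \hat h(k)\epsilon(\chi)^k$. In the $k$-th term, the root number $\epsilon(\chi)^k$ becomes a product of Gauss sums. By orthogonality, the sum over $\chi$ of $\epsilon(\chi)^k\chi(n)\bar\chi(m)$ turns into a $|k|$-dimensional hyper-Kloosterman sum $\mathrm{Kl}_{|k|}$, evaluated at $m\bar n$ or a similar argument. The $k=0$ term is the classical mollified moment: with the Iwaniec--Sarnak choice $y_m=\mu(m)P(\log(M/m)/\log M)$ it gives the main terms, scaled by $\hat h(0)\approx\mu(I_q)$.

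For $S_1$ we use the approximate functional equation. The terms with $k\neq 0$ become sums of $\hat h(k)$ times $\sum_{n,m}\frac{y_m}{\sqrt{nm}}V(n/\sqrt q)\,\mathrm{Kl}_{|k|}(\pm nm;q)$, up to normalization. The term $k=-1$ deserves care: combined with the dual sum in the functional equation, $\epsilon(\chi)^{-1}$ cancels the root number, and this produces a secondary main term that we must track. For $S_2$ the off-diagonal terms with $k\ne0$ involve correlations of $\mathrm{Kl}_{|k|}$ with smooth sums over $n_1,n_2,m_1,m_2$. We bound these using Deligne's bound $|\mathrm{Kl}_{k}|\le k q^{(k-1)/2}$ together with cancellation from the sums of trace functions (Fouvry--Kowalski--Michel type bounds in the ranges $n\le q^{1/2+\delta}$). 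This gives an error of size $\ll q^{-\kappa}\mu(I_q)^{-A}$ with explicit exponents. These error terms are admissible only if $\mu(I_q)^{-1}$ is a small power of $q$ and $\vartheta$ is reduced to some $\vartheta(\eta)>0$. The numerology $\eta<1/480$ should come out of this balancing.

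\textbf{Main obstacle.} The hardest step is showing that the $k\ne0$ contributions to $S_2$ are $o(\mu(I_q)\varphi^+(q))$ uniformly for $|k|$ up to $q^{\eta+\delta}$. The difficulty is that the dependence on $k$ in the bounds for sums of hyper-Kloosterman sums must be polynomial with explicit exponent. I would try first the Pólya--Vinogradov completion method: complete the $n$-sums modulo $q$ and bound the resulting complete sums via Deligne, with the $k$-dependence coming from the conductor of $\mathrm{Kl}_k$. I would use more refined sheaf-theoretic input only if that fails.

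Given the asymptotics $S_1\sim \mu(I_q)\varphi^+(q)A(\vartheta)$ and $S_2\sim\mu(I_q)\varphi^+(q)B(\vartheta)$ with $A(\vartheta)^2/B(\vartheta)=c_0(\vartheta)>0$ (the Iwaniec--Sarnak value $\vartheta/(1+\vartheta)$ after optimizing $P$), we finish with Cauchy--Schwarz. Split $S_1$ according to whether $|L(1/2,\chi)|$ exceeds the threshold $\epsilon\mu(I_q)(\log q)^{-1/2}$. The mollifier satisfies $|M(\chi)|\ll (\log q)^{1/2}$ on average, weighted by $h$, via its own weighted second moment. Hence the small values contribute at most $\epsilon\,\mu(I_q)\cdot O(\mu(I_q)\varphi^+(q))$, which is negligible. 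Cauchy--Schwarz on the remaining terms then gives
\[
\sum_{\chi:\,|L|>\text{thr}}h(\theta_\chi)\ \ge\ \frac{|S_1|^2(1-o(1))}{S_2}\ \ge\ (1-\epsilon)c(\eta)\mu(I_q)\varphi^+(q).
\]
Since $h\le \mathbf 1_{I_q}$, the left side is at most $N(q,I_q,\epsilon)$, with $c(\eta)=c_0(\vartheta(\eta))$.
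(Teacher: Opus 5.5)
Your architecture matches the paper's: expand the weight in powers of $\epsilon(\chi)$, reduce to hyper-Kloosterman sums, use the Iwaniec--Sarnak mollifier, and finish with Cauchy--Schwarz. You also correctly locate the crux: power-saving bounds, polynomial in $k$, for $\sum^{+}_{\chi}\chi(m_1)\overline{\chi}(m_2)\epsilon(\chi)^k|L(1/2,\chi)|^2$ with $|k|\ge 2$. But the method you propose for that crux fails.

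With a symmetric approximate functional equation, each such term has the shape $\frac{\varphi(q)}{\sqrt q}\sum_{n_1,n_2}(n_1n_2)^{-1/2}V(\cdot)V(\cdot)\,\mathrm{Kl}(c\,n_1^{\pm1}n_2^{\pm1};q)$ with $n_1,n_2$ up to about $\sqrt q$. The trivial bound $\ll |k|q$ is dominated by the box $n_1\sim n_2\sim\sqrt q$, which is exactly the P\'olya--Vinogradov threshold. Completing one or both variables modulo $q$ returns the trivial bound there. So you save no power of $q$, let alone the extra $q^{\vartheta}$ lost when summing trivially over $m_1,m_2\le M$.

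Your fallback to Fouvry--Kowalski--Michel is not enough as stated either. Their type-$I_2$ estimate (Theorem~\ref{thm: FKM smoothed sums}, with conductor dependence $\mathrm{cond}^{18}$) handles kernels $K(n_1n_2)$, not the mixed kernels $\mathrm{Kl}_k(c\,n_1\overline{n_2})$ coming from the cross terms. General bilinear bounds for $K(m\overline{n})$ are not known with polynomial conductor dependence, which shrinking $I_q$ requires.

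The missing idea is an \emph{asymmetric} approximate functional equation with $X=q^{\theta}$. Some term must still contain the critical box, but the asymmetry arranges the four terms as follows.
\begin{itemize}
\item The critical box now sits in the term with kernel $\mathrm{Kl}_{k+1}(c\,n_1n_2)$ and both lengths $q^{1/2+\theta}$. There the type-$I_2$ bound gives $k^{18}q^{7/8+\theta+\epsilon}$.
\item The doubly inverted term has length $q^{1-2\theta}$ and is trivially $\ll kq^{1-\theta+\epsilon}$.
\item The mixed terms become unbalanced, $q^{1/2+\theta}$ against $q^{1/2-\theta}$. The paper exploits this with the shift $n_1\mapsto n_1+hn_2$, averaging over $h\le q^{\theta}$, then Cauchy--Schwarz and correlation sums of $\mathrm{Kl}_k$, giving $kq^{1-\theta/2+\epsilon}$. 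Once the lengths are unbalanced, completion in the long variable would also suffice.
\end{itemize}
Taking $\theta=1/12$ gives $|k|^{18}q^{23/24+\epsilon}$. The condition $\eta<1/480$ then comes from this exponent $18$, from $\alpha<1/24$, and from $J=20$.

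Second, you treat every $k\neq0$ in $S_2$ as oscillatory, but for $k=\pm1$ one cross term carries $\epsilon(\chi)^0$. The Kloosterman sum degenerates, and orthogonality leaves the congruence $m_1n_1n_2\equiv\pm m_2 \pmod q$. Its diagonal $m_1n_1n_2=m_2$ contributes essentially $\varphi^+(q)\sum_{mm_1\le M}y_{m_1}y_{mm_1}\tau(m)/(mm_1)$. No trace-function estimate touches this term, and trivially it is only $O(\varphi^+(q)(\log q)^3)$. Since $|\hat h(\pm1)|$ is comparable to $\mu(I_q)$ for short intervals, it would spoil the asymptotic for $S_2$ unless shown to be $o(\varphi^+(q))$.

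That requires the arithmetic of the mollifier coefficients: in the paper, unitary-convolution identities collapse to $\delta(n)$ and give $O(\varphi^+(q)/\log M)$. The same mechanism, not oscillation, disposes of the $k=-1$ secondary term in $S_1$ that you flag. The rest of your plan is fine: Deligne's bound for the first-moment terms, the threshold argument, and the final Cauchy--Schwarz.
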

\begin{rmk}\label{rmk: choice of c(eta)}
    Theorem \ref{thm: positive proportion} gives a proportion of non-vanishing on of at least $(1-\epsilon)c(\eta)$ by the equidistribution of Gauss sums proved in \cite{K88}. Our proof allows us to take
    \begin{equation*}
        c(\eta)= \frac{1}{25}-\frac{96}{5}\eta.
    \end{equation*}
    In particular, for a fixed interval $I$, we have a proportion of non-vanishing of $\frac{1}{25}(1-\epsilon)$.
\end{rmk}
To obtain this non-vanishing result we evaluate mollified first and second moments weighted by smooth functions of the angle $\theta_\chi$. We apply the mollifier from \cite{IS99}, which takes the form
\begin{equation*}
M(\chi)=\sum_{m\leq M}\frac{x_m\chi(m)}{m^{1/2}},
\end{equation*}
where the coefficients $x_m$ are defined in \eqref{mollifier coefficients}, and the length is taken to be $M=q^\alpha$ for $\alpha>0$.
Consider a family of smooth functions $f_q:\R/\Z\to\C$ varying over $q$ prime. Let $J$ be a sufficiently large integer (we prove that $J=20$ is large enough) and suppose that the family $f_q$ satisfies the bounds
\begin{align}
    \|f_q^{(J)}\|_1&\ll q^{1/24-\alpha-\epsilon}\left|\int_0^1 f_q(x)\,dx\right|,\label{f_q derivative condition}\\ 
    \|f_q\|_1&\ll \left|\int_0^1 f_q(x)\,dx\right|.\label{f_q condition 2}
\end{align}
Note that these bounds are trivially satisfied if $f_q=f$ is a fixed, positive-valued function. Then we have the following theorem for the smoothed moments.
\begin{thm}\label{thm: smoothed mollified moments} Let $f_q:\R/\Z\to\C$ be a family of smooth functions varying over $q$ prime, satisfying conditions \eqref{f_q derivative condition} and \eqref{f_q condition 2}. Then, as $q\to\infty$ over $q$ prime, we have the following asymptotics for the smoothed mollified first and second moments:
    \begin{align*}
        &\cC=\evensum f_q(\theta_\chi)M(\chi)L(1/2,\chi)= \left(\int_0^1 f_q(x)\,dx\right)\left\{\varphi^+(q)+o\left(\varphi^+(q)\right)\right\},\\
        &\cD=\evensum f_q(\theta_\chi)|M(\chi)L(1/2,\chi)|^2= \left(\int_0^1 f_q(x)\,dx\right)\left\{\left(1+\frac{1}{\alpha}\right)\varphi^+(q)+o\left(\varphi^+(q)\right)\right\},
    \end{align*}
    where $M(\chi)$ is a mollifier of length $q^\alpha$ with $\alpha<1/24$.
\end{thm}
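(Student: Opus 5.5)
Expand $f_q$ in its Fourier series, $f_q(x)=\sum_{k\in\Z}\hat f_q(k)e(kx)$, so that $f_q(\theta_\chi)=\sum_k \hat f_q(k)\epsilon(\chi)^k$. The zero mode is $\hat f_q(0)=\int_0^1 f_q$. Hence $\cC$ and $\cD$ each become $\hat f_q(0)$ times the unweighted mollified moment, plus a sum over $k\neq 0$ of $\hat f_q(k)$ times twisted moments
\begin{equation*}
\evensum \epsilon(\chi)^k M(\chi)L(1/2,\chi) \quad\text{and}\quad \evensum \epsilon(\chi)^k |M(\chi)L(1/2,\chi)|^2.
\end{equation*}
For the unweighted moments I would quote the Iwaniec--Sarnak computation \cite{IS99}, adapted to even characters of prime modulus. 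It gives $\varphi^+(q)(1+o(1))$ for the first moment and $(1+1/\alpha)\varphi^+(q)(1+o(1))$ for the second, for any $\alpha<1/2$. Everything then reduces to showing that the $k\neq 0$ contribution is $o(\varphi^+(q)|\hat f_q(0)|)$.

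For the tail I would split at a parameter $K$. For $|k|>K$, use the trivial bound on the twisted moments, together with the bound $|\hat f_q(k)|\le \|f_q^{(J)}\|_1 |2\pi k|^{-J}$. The trivial bounds are $O(q^{1+\alpha/2+\epsilon})$ for the first moment and $O(q^{1+\epsilon}M)$ for the second, the latter from the large sieve. Condition \eqref{f_q derivative condition} then makes the tail $\ll q^{1/24-\alpha}K^{1-J}q^{1+\alpha+\epsilon}|\hat f_q(0)|$, which is negligible once $K=q^{\delta}$ and $J$ is large ($J=20$). For $1\le |k|\le K$, use $|\hat f_q(k)|\le \|f_q\|_1\ll |\hat f_q(0)|$ from \eqref{f_q condition 2}. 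It then suffices to prove a power saving
\begin{equation*}
\evensum \epsilon(\chi)^k |M(\chi)L(1/2,\chi)|^2\ll q^{1-\delta'}\quad\text{uniformly for } 1\le|k|\le K,
\end{equation*}
with $K q^{-\delta'}\to 0$, and similarly for the first moment.

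To bound the twisted moments, I would write $L(1/2,\chi)$ and $\overline{L(1/2,\chi)}$ by the approximate functional equation, and expand $\epsilon(\chi)^k=q^{-k/2}\sum_{t_1,\dots,t_k}\chi(t_1\cdots t_k)e((t_1+\dots+t_k)/q)$; for $k<0$ use conjugates. Orthogonality over even characters forces $t_1\cdots t_k\equiv \pm \bar{a} b$, where $a,b$ collect the Dirichlet polynomial variables from the mollifier and the $L$-values. The character sum becomes a hyper-Kloosterman sum $\mathrm{Kl}_k(\pm ab;q)$ normalized by $q^{-k/2}$, and $|q^{-(k-1)/2}\mathrm{Kl}_k|\le k$ by Deligne. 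Inserting this bound directly, with Dirichlet polynomials of total length about $q^{1/2+\alpha}$ on each side, loses too much. So I would first apply the functional equation to move one long sum to its dual side: the twist $\epsilon(\chi)^k$ combines with $\epsilon(\chi)^{\pm1}$ from the functional equation, shifting $k$ to $k\pm1$. Then I would use Deligne's bound together with bounds for sums of hyper-Kloosterman sums against smooth weights, in the form of Fouvry--Kowalski--Michel-type bilinear estimates for trace functions. These give savings of size $q^{-1/48}$ or so.

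The main obstacle is this step: getting a power saving uniform in $k$ up to $K$ for the twisted second moment. Bookkeeping the exponents (mollifier length $q^\alpha$, saving $\asymp q^{1/24}$ in the relevant range, losses polynomial in $k$) is exactly what forces $\alpha<1/24$ and the derivative condition with exponent $1/24-\alpha$. The first moment is easier, since only one $L$-value appears and a single Kloosterman sum bound suffices. Both estimates must be checked to be independent of $f_q$, so that the errors sum to $o(\varphi^+(q)\hat f_q(0))$.
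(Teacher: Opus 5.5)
\textbf{There is a genuine gap at $k=\pm1$.} Your reduction asks for a power saving $\ll q^{1-\delta'}$ for the twisted moments uniformly in $1\le|k|\le K$. No such saving exists for $\evensum\epsilon(\chi)^{-1}M(\chi)L(1/2,\chi)$, and none is available for $\evensum\epsilon(\chi)^{\pm1}|M(\chi)L(1/2,\chi)|^2$.

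At these frequencies the twist cancels the root number produced by the functional equation. So your step ``shift $k$ to $k\pm1$'' lands on exponent $0$: no hyper-Kloosterman sum remains, only an orthogonality relation with a genuine diagonal.
\begin{itemize}
\item For the first moment this gives exactly $\varphi^+(q)\sum_{m\le M}x_m/m+O(\sqrt{qM})=\varphi^+(q)/G+O(\sqrt{qM})$, which has exact order $\varphi^+(q)/(\alpha\log q)$.
\item For $\cD(1)$, the term with $\epsilon(\chi)\epsilon(\ol\chi)=1$ yields the congruence $m_1n_1n_2\equiv\pm m_2\pmod q$. Its diagonal $m_1n_1n_2=m_2$ contributes $\varphi^+(q)\sum_{mm_1\le M}x_{m_1}x_{mm_1}\tau(m)/(mm_1)$. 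Using only $|x_m|\le1$ this is $O(q(\log q)^3)$, not even $o(\varphi^+(q))$.
\end{itemize}
Deligne's bound and Fouvry--Kowalski--Michel estimates give nothing here. Likewise, your remark that a single Kloosterman bound suffices for the first moment is true only for $k\neq0,-1$.

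\textbf{The missing idea.} Isolate these finitely many frequencies and evaluate the diagonal using the explicit Iwaniec--Sarnak coefficients.
\begin{itemize}
\item The M\"obius cancellation built into $x_m$ gives $\sum_m x_m/m=1/G$, via the unitary convolution identity $f\star_1 g=\delta$.
\item For $\cD(1)$, the identity $f\star_1 g\star_1 g=\delta$ together with Mertens' theorem gives $O(\varphi^+(q)/\log M)$.
\end{itemize}
This is only a logarithmic saving. It suffices because it is needed for just these few $k$, and $|\hat f_q(\pm1)|\le\|f_q\|_1\ll|\hat f_q(0)|$ by \eqref{f_q condition 2}. That is exactly why that condition is imposed, and why the theorem states $o(\varphi^+(q))$ rather than a power saving.

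\textbf{Smaller points for $|k|\ge2$.} Your outline is in the spirit of the paper there, with two differences.
\begin{itemize}
\item The paper does not split at $K$. It bounds $|\hat f_q(k)|\ll|k|^{-J}\|f_q^{(J)}\|_1$ against $|k|^{18}q^{23/24+\alpha+\epsilon}$ for every $|k|\ge2$, which is summable once $J>19$. Your split with trivial tail bounds is unnecessary and forces a larger $J$.
\item The mixed terms $\Kl_k(cn_1\ol{n_2};q)$ have total length $\approx q$ and are not covered by the type $I_2$ bound of Fouvry--Kowalski--Michel. The paper handles them with an asymmetric approximate functional equation, the shift $n_1\mapsto n_1+hn_2$, and correlation bounds for $\Kl_k$. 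This is what keeps the dependence on $k$ polynomial.
\end{itemize}
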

\begin{rmk}
    Throughout this paper, $\sum^*$ denotes summation over primitive characters, and $\sum^+$ denotes summation over even primitive characters.
\end{rmk}
To obtain the smoothed mollified moments in Theorem \ref{thm: smoothed mollified moments}, we first estimate first and second moments weighted by a character and a power of the root number.
\begin{thm}\label{thm: first weighted moment}
    Let $q,\ m$ be positive integers and $k$ any integer. Then 
    \begin{align*}
        \cA(m,k):=\evensum \chi(m)\epsilon(\chi)^k L(1/2,\chi)=
        \begin{cases}
            \displaystyle \delta(m)\varphi^+(q)+O\left(\tau(q)\sqrt{mq}\right),\quad &k=0,\\[5pt]
            \displaystyle \frac{\varphi^+(q)}{\sqrt{m}}+O\left(\tau(q)\sqrt{q}\right),\quad &k=-1,\\[10pt]
            \displaystyle O\left(|k|^{\omega(q)}\tau(q)q^{3/4}\right),\quad &k\neq0,-1,
        \end{cases}
    \end{align*}
    where $\delta(m)$ is the Kronecker delta symbol
    \begin{align*}
        \delta(m)=
        \begin{cases}
            1,\quad &m=1,\\
            0,\quad &\textrm{otherwise},
        \end{cases}
    \end{align*}
    and the implied constants are absolute.
\end{thm}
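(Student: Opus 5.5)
We prove Theorem \ref{thm: first weighted moment} by writing $L(1/2,\chi)$ with an approximate functional equation. For even primitive $\chi$ modulo $q$,
\begin{equation*}
    L(1/2,\chi)=\sum_{n\geq1}\frac{\chi(n)}{\sqrt n}V\Big(\frac{n}{\sqrt q}\Big)+\epsilon(\chi)\sum_{n\geq1}\frac{\overline\chi(n)}{\sqrt n}V\Big(\frac{n}{\sqrt q}\Big),
\end{equation*}
where $V$ is the usual smooth weight, essentially $1$ for $n\ll \sqrt q$ and rapidly decaying beyond. Then $\cA(m,k)$ splits into two pieces, which we handle separately:
\begin{equation*}
    \evensum \chi(mn)\epsilon(\chi)^k \quad\text{and}\quad \evensum \chi(m)\overline\chi(n)\epsilon(\chi)^{k+1}.
\end{equation*}

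\textbf{Orthogonality ($k=0$ and $k=-1$).} When the power of $\epsilon$ vanishes we use orthogonality for even primitive characters:
\begin{equation*}
    \evensum\chi(a)\overline\chi(b)=\frac12\sum_{\substack{d\mid q\\ d\mid a\mp b}}\mu(q/d)\varphi(d)\quad\text{(summed over both signs)}, \qquad (ab,q)=1.
\end{equation*}
For $k=0$, the first piece needs $mn\equiv\pm1\pmod d$. The diagonal $mn=1$ forces $m=n=1$ and gives $\delta(m)\varphi^+(q)$. The off-diagonal terms, together with the whole second piece (which carries a single $\epsilon(\chi)$), are bounded as follows. Expand $\epsilon(\chi)$ as a Gauss sum, use that $\sum_\chi\chi(a)$ over a coset detects a congruence, and bound by $\tau(q)\sqrt{mq}$ from $\sum_{d\mid q}\varphi(d)\cdot\#\{n\ll\sqrt q: mn\equiv\pm1 \ (d)\}/\sqrt n$. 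This is $\ll \sum_d \varphi(d)\big(1+\sqrt q\, m/d\big)^{1/2}$-type counting with $n\le\sqrt q$, and this is where $\sqrt{mq}$ enters.

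For $k=-1$ it is the second piece that has no $\epsilon$. Its diagonal $n\equiv m$, with $n=m$, gives $\varphi^+(q)m^{-1/2}V(m/\sqrt q)$. Since $m$ is fixed and $q\to\infty$ we have $V\approx1$, and we absorb the error or choose a $V$ with $V(x)=1+O(x^A)$. The remaining terms are bounded in the same way, with error $O(\tau(q)\sqrt q)$, uniform in $m$ because the congruence $n\equiv\pm m$ modulo $d$ with $n\le \sqrt q$ has few solutions.

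\textbf{Nonzero powers of $\epsilon$ ($k\neq0,-1$).} Both pieces now carry $\epsilon(\chi)^j$ with $j\ne0$. Expanding $\epsilon(\chi)^j=q^{-j/2}\sum_{t_1,\dots,t_j}\chi(t_1\cdots t_j)e((t_1+\dots+t_j)/q)$ (with conjugates if $j<0$) and summing over $\chi$ reduces the character sum to a hyper-Kloosterman sum:
\begin{equation*}
    \evensum\chi(a)\epsilon(\chi)^j\approx \frac{\varphi(q)}{2q^{j/2}}\big(\mathrm{Kl}_j(\bar a;q)+\mathrm{Kl}_j(-\bar a;q)\big).
\end{equation*}
Deligne's bound gives $|\mathrm{Kl}_j|\le |j|^{\omega(q)}q^{(j-1)/2}$ for prime $q$; for composite $q$ we use multiplicativity, which costs $\tau(q)$ and $|k|^{\omega(q)}$. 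So each character sum is $\ll |k|^{\omega(q)}\tau(q)\sqrt q$, and summing trivially over $n\ll\sqrt q$ with weight $n^{-1/2}$ gives $q^{1/4}\cdot\sqrt q=q^{3/4}$.

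The main obstacle is this last case. We need the uniform Deligne-type bound for the hyper-Kloosterman sums, including the correct handling of non-primitive characters and composite $q$ through the Möbius inversion. We also need to check that the constants stay absolute, apart from the $|k|^{\omega(q)}$ factor coming from the number of $k$-th roots in the Kloosterman bound.
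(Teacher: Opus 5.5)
Your case $k\neq 0,-1$ is fine; it is the paper's argument with the balanced equation. The cases $k=0$ and $k=-1$, however, have a genuine gap, caused by using the balanced approximate functional equation (both sums of length $\sqrt q$).

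For $k=0$ the dual piece carries one factor $\epsilon(\chi)$. Opening the Gauss sum and applying orthogonality puts the congruence on the Gauss-sum variable ($t\equiv\pm\ol m n$), not on $n$. For $q$ prime one gets exactly
\[
\sum_{\chi}^{+}\chi(m)\ol{\chi}(n)\epsilon(\chi)=\frac{1}{\sqrt q}\Big(\frac{q-1}{2}\big(e(\ol m n/q)+e(-\ol m n/q)\big)+1\Big),
\]
which is of size $\sqrt q$; for $m=1$ and $n\le\sqrt q$ it equals $\sqrt q\,(1+o(1))$. So your count of $n\ll\sqrt q$ with $mn\equiv\pm1\pmod{d}$ does not apply to this piece. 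What your method actually yields is the Deligne-type bound $O(\tau(q)q^{3/4})$. This exceeds the claimed $O(\tau(q)\sqrt{mq})$ whenever $m<\sqrt q$, in particular in the main case $m=1$.

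The same problem occurs for $k=-1$: the first piece carries $\ol{\epsilon(\chi)}$ and is only $O(\tau(q)q^{3/4})$, not $O(\tau(q)\sqrt q)$. In addition, your $k=-1$ diagonal is $\varphi^+(q)m^{-1/2}V(m/\sqrt q)=\varphi^+(q)m^{-1/2}+O(m^{3/2})$. This is not within $O(\tau(q)\sqrt q)$ uniformly in $m$, and the theorem is uniform in $m$, not stated for fixed $m$. For $m\gg\sqrt q$ the factor $V(m/\sqrt q)$ destroys the main term altogether, so bounding the two pieces separately cannot give the $k=-1$ case with a balanced equation.

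The missing idea is the paper's unbalanced approximate functional equation. Put $V(n/(X\sqrt q))$ in the first sum and $V(nX/\sqrt q)$ in the dual sum, and choose $X$ according to $k$.

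For $k=0$ take $X=q^{1/2}$. The piece containing $\epsilon(\chi)$ then has effective length $O(1)$ and is $\ll\tau(q)q^{3/4}X^{-1/2}=\tau(q)\sqrt q$. The orthogonality piece now has length $q$, and exactly your congruence counting gives $\delta(m)\varphi^+(q)+O\big(\tau(q)(\sqrt{mq}+q^{1/4}X^{1/2})\big)=\delta(m)\varphi^+(q)+O(\tau(q)\sqrt{mq})$.

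For $k=-1$ take $X=q^{-1/2}$. Symmetrically, the $\ol{\epsilon(\chi)}$ piece is $\ll\tau(q)X^{1/2}q^{3/4}=\tau(q)\sqrt q$. The diagonal becomes $\varphi^+(q)m^{-1/2}V(m/q)=\varphi^+(q)m^{-1/2}+O(m^{3/2}/q)$, which is harmless for all $m<q$. Only for $k\neq 0,-1$ does one keep $X=1$.

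Rescuing your balanced version would require a separate cancellation argument for $\sum_n n^{-1/2}V(n/\sqrt q)e(\pm\ol m n/q)$, which you do not give, and it would still not handle $k=-1$ uniformly in $m$.
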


\begin{thm}\label{thm: second weighted moment}
    Let $q$ be a prime, $k$ an integer with $|k|\geq 2$, and $m_1,\,m_2$ integers. Then
    \begin{equation*}
        \cB(m_1,m_2,k):=\evensum \chi(m_1)\ol{\chi}(m_2)\epsilon(\chi)^k|L(1/2,\chi)|^2\ll |k|^{18}q^{23/24+\epsilon},
    \end{equation*}
    with any $\epsilon>0$, where the implied constant depends only on $\epsilon$.
\end{thm}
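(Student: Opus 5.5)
We want to bound $\cB(m_1,m_2,k)$ for $|k|\ge 2$, and the plan is to open everything up into character sums and reduce to hyper-Kloosterman-type sums. This is the approach of Theorem \ref{thm: first weighted moment} for $k\neq 0,-1$, but with one more $L$-function.

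\textbf{Step 1: expansions.} First apply the approximate functional equation to $|L(1/2,\chi)|^2=L(1/2,\chi)L(1/2,\ol\chi)$. Since $\chi$ is even, this is
\begin{equation*}
|L(1/2,\chi)|^2 = 2\sum_{a,b\geq1}\frac{\chi(a)\ol\chi(b)}{\sqrt{ab}}\,V\!\left(\frac{\pi ab}{q}\right),
\end{equation*}
where $V$ decays rapidly. The point of this form is that the root numbers cancel, so no extra $\epsilon(\chi)$ factors appear. Next expand $\epsilon(\chi)^k$ as a product of $|k|$ Gauss sums; for $k<0$ use $\ol{\epsilon(\chi)}=\epsilon(\ol\chi)$, since $\chi$ is even. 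This gives
\begin{equation*}
\epsilon(\chi)^k = q^{-|k|/2}\sum_{t_1,\dots,t_{|k|}}\chi^{\pm1}(t_1\cdots t_{|k|})\,e\!\left(\frac{t_1+\dots+t_{|k|}}{q}\right).
\end{equation*}
Then perform the sum over even characters by orthogonality: $\sum^+_\chi \chi(x)=\tfrac12(\mathbf 1_{x\equiv1}+\mathbf 1_{x\equiv -1})-\tfrac1{q-1}$, where the last term removes the trivial character. This is exact because $q$ is prime. The $t$-sum then collapses to
\begin{equation*}
q^{-|k|/2}\,\mathrm{Kl}_{|k|}(\pm a^{\pm1}b^{\mp1}m_1^{\pm1}\ol{m_2}^{\pm1};q),
\end{equation*}
a hyper-Kloosterman sum normalized by $q^{-|k|/2}$. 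Deligne's bound gives $|\mathrm{Kl}_{|k|}|\le |k|q^{(|k|-1)/2}$, so each such term is $\ll |k|q^{-1/2}$. The trivial-character term contributes $q^{-1}\cdot q^{-|k|/2}\,|\text{Ram. sum}|\ll q^{-1/2}$, which is likewise harmless.

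\textbf{Step 2: the main difficulty.} Bounding trivially at this point gives
\begin{equation*}
\sum_{ab\lesssim q^{1+\epsilon}}(ab)^{-1/2}\cdot |k|q^{1/2}\approx |k|q^{1+\epsilon},
\end{equation*}
where the factor $q^{1/2}$ is $q\cdot q^{-1/2}$ from orthogonality. This misses the target, so we must save $q^{1/24}$ from cancellation in the sum over $a,b$ of
\begin{equation*}
\mathrm{Kl}_{|k|}(c\,a\ol b;q),\qquad c:=\pm m_1^{\pm1}\ol{m_2}^{\pm1},
\end{equation*}
with smooth weights. Proceed as follows:
\begin{enumerate}
\item Split $a,b$ dyadically into $a\sim A$, $b\sim B$ with $AB\le q^{1+\epsilon}$.
\item If $A$ (say) is at least $q^{1/2+\delta}$, apply Poisson/completion in $a$ modulo $q$. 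The Fourier transform of $\mathrm{Kl}_{|k|}$ is again of Kloosterman type, bounded by $\ll_k q^{1/2}$ (the trace functions are Fourier-stable by Deligne–Katz). This gives a saving of $A/q^{1/2}$.
\item In the balanced range $A\approx B\approx q^{1/2}$, which is the critical case, a bilinear-form estimate is needed. Use the Kowalski–Michel–Sawin bounds for type II sums of hyper-Kloosterman sums,
\begin{equation*}
\sum_{a\sim A}\sum_{b\sim B}\alpha_a\beta_b\,\mathrm{Kl}_{|k|}(c\,ab;q),
\end{equation*}
applied after replacing $b$ by $\ol b$ (or using the $ab$ shape directly, since $\ol b$ can be absorbed by a change of variable in the Kloosterman sum after symmetry). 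These give a power saving of the form $q^{-1/64}$ to $q^{-1/24}$ with polynomial dependence on $|k|$ from the conductor of the sheaf; I expect this to produce the $|k|^{18}$ factor.
\end{enumerate}
Optimizing the cut between the completion range and the bilinear range should give the exponent $23/24$.

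\textbf{Step 3: where the difficulty lies.} The hard part is securing a uniform nontrivial saving near $A\approx B\approx q^{1/2}$ with explicit polynomial dependence on $k$. If the published bilinear bounds are not explicit in $k$, I would instead run a direct Cauchy–Schwarz plus completion argument. This needs the following sum-product statement: the sum over $x$ modulo $q$ of
\begin{equation*}
\mathrm{Kl}_{|k|}(c\,b_1x)\,\ol{\mathrm{Kl}_{|k|}(c\,b_2x)}\;\cdots
\end{equation*}
has square-root cancellation unless $b_1=b_2$. This is Katz's monodromy computation, that $\mathrm{Kl}_{|k|}$ has $SL$ or $Sp$ monodromy with no multiplicative shift-invariance for $|k|\ge2$. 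The conductor of the resulting sheaves is polynomial in $k$, and that polynomial dependence is the source of the $|k|^{18}$.
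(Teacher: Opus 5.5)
\textbf{There is a genuine gap: the saving in the balanced range is asserted, not proved.} Your reduction is fine. The symmetric approximate functional equation and orthogonality give $\cB(m_1,m_2,k)\approx\sqrt q\sum_{a,b}(ab)^{-1/2}V(\pi ab/q)\Kl_k(c\,a\ol b;q)$ with the normalized sum. The trivial bound is $|k|q^{1+\epsilon}$, and completion handles dyadic boxes where one variable exceeds $q^{1/2+\delta}$. But the whole content of the theorem is a uniform saving when both variables are near $q^{1/2}$, and that step is missing.
\begin{itemize}
\item The Kowalski--Michel--Sawin bound \cite{KMS17} concerns $\Kl_k(amn)$ with $m\le M$ and $n$ in an interval. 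Renaming $\ol b$ as a new variable does not help: $\ol b$ runs over a scattered subset of $[1,q)$, so their bound applies only with $M\approx q$ and saves nothing.
\item You cannot swap the roles of the two variables either, since $x\mapsto\Kl_k(\ol x;q)$ is not a hyper-Kloosterman sum (the pulled-back sheaf is wild at $0$).
\item Their implied constants depend on $k$ in an unspecified way, so the $|k|^{18}$ with a constant depending only on $\epsilon$ has no source.
\item Even granting an extension with their saving $(M^2N^5/q^3)^{1/12}$, with $N$ the non-inverted interval variable, your optimization does not reach $23/24$. If the inverted variable has length $q^{1/2+s}$ ($s>0$) and the other $q^{1/2-s}$, the bilinear bound gives $q^{23/24+s/4}$ and completion gives $q^{1-s}$. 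The worst box then gives only $q^{29/30}$.
\item Your fallback fails outright. Do Cauchy--Schwarz over a variable of length $A$, with the other of length $B$, and then complete. If you complete over all residues, the diagonal $b_1=b_2$ alone is $\gg ABq\ge(AB)^2$. If you keep the interval, you get $|S|\ll A\sqrt B+\sqrt A\,Bq^{1/4}$, which is nontrivial only for $A>q^{1/2}$. No off-diagonal square-root cancellation can rescue this.
\end{itemize}

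\textbf{What is missing.} You need a way to turn the configuration with a long inverted variable into a sum where both variables are non-inverted. There the type $I_2$ bound of \cite{FKM14} (Theorem \ref{thm: FKM smoothed sums}) applies, and its $\cond^{18}$ dependence is what produces $|k|^{18}$.

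The paper does this with an asymmetric approximate functional equation, $X=q^{1/12}$, applied to $L(1/2,\chi)$ and $L(1/2,\ol\chi)$ separately.
\begin{itemize}
\item The dual parts carry factors $\epsilon(\chi)^{\pm1}$. So the long--long term $\cB_4$ has kernel $\Kl_{k+1}(cn_1n_2)$ and costs $k^{18}q^{7/8+\theta}$.
\item The short--short term $\cB_1$ is trivially $q^{1-\theta}$.
\item In the mixed terms the non-inverted variable is longer by $q^{2\theta}$. This permits the shift $n_1\mapsto n_1+hn_2$, which makes the kernel $\Kl_k(c(n_1\ol{n_2}+h))$. Cauchy--Schwarz against $\nu(x)$, with $\sum_x\nu(x)^2\ll q^\epsilon$ rather than a lossy completion, plus FKM's bound for additive-shift correlations, then gives $kq^{1-\theta/2}$.
\end{itemize}

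In your symmetric setup the analogous step is Poisson summation in the inverted variable. For $h\not\equiv0$ one has $q^{-1/2}\sum_x\Kl_k(\lambda\ol x;q)e(hx/q)=\Kl_{k+1}(\lambda h;q)$. So a box $(A,B)$, with $B$ the inverted length, becomes a smooth sum of $\Kl_{k+1}(c\,ah)$ with $h\lesssim q/B$. FKM bounds this by $k^{18}q^{7/8+\epsilon}\max(1,\sqrt{A/B})$. Together with the bound $q\sqrt{B/A}$ from completion in $a$, that suffices.
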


\begin{rmk}
    The approach used to compute the weighted second moments in Theorem \ref{thm: second weighted moment} produces error terms for $|k|\leq 1$ which cannot be bounded well enough individually. For example, the case of $k=\pm1$, $m_1=m_2$ contributes $O(q)$. However, the explicit choice of a mollifier, which exploits the cancellation of the M\"obius function, allows us to obtain logarithmic saving in the mollified second moment.
\end{rmk}

\subsection{Remarks}
Evaluating the mollified moments of the family of Dirichlet $L$-functions of fixed modulus $q$ with restrictions on the root number gives rise to hyper-Kloosterman sums
\begin{equation*}
    \Kl_k(x;q)=\frac{1}{q^{(k-1)/2}}\sum_{\substack{x_1,\ldots,x_k\shortmod{q}\\x_1\cdots x_k\equiv x\shortmod{q}}}e\left(\frac{x_1+\cdots+x_k}{q}\right).
\end{equation*}
Due to Deligne, $|\Kl_k(x;q)|\leq k^{\omega(q)}$ for all $(x,q)=1$. In the evaluation of the mollified second moment using an approximate functional equation, bilinear forms with Kloosterman sums of the shape
\begin{equation*}
    \sum_{m\leq M}\sum_{n\leq N}f(m)g(n)\Kl_k(cm^an^b;q)
\end{equation*}
appear, where $a,b\in\{\pm1\}$, and we need extra saving over Deligne's bound.
We can consider an asymmetrical approximate functional equation so that the sums with $(a,b)=(1,1)$ have length $>q$, the sums with $(a,b)=(-1,-1)$ have length $<q$, and the sums with $(a,b)=(1,-1)$ and $(a,b)=(-1,1)$ have length $q$.
For $a=b=1$, work of Fouvry, Kowalski, and Michel \cite{FKM14} gives significant power saving over the trivial bound. For the cases $(a,b)=(1,-1)$ and $(a,b)=(-1,1)$, we take advantage of the asymmetric approximate functional equation and apply a shifting trick along the lines of Kowalski, Michel, and Sawin in \cite{KMS17} to reduce to a complete exponential sum.

A result similar to Theorem \ref{thm: positive proportion} was achieved independently by Berta and zur Verth \cite{BzV26}, which considered simultaneous non-vanishing in toroidal families of Dirichlet $L$-functions with root number restrictions. This result depends upon recent work by Fouvry, Kowalski, Michel, and Sawin \cite{FKMS25}, which gives non-trivial bounds for a very general class of bilinear forms with trace functions, where the dependence on the complexity of the associated $\ell$-adic sheaf is not known to be polynomial. Our approach differs in its use of a $+hn$ shift to the $m$-variable in the monomial kernels $K(m\ol{n})$, allowing us to apply the earlier results in \cite{FKM14} which have polynomial dependence on the conductor. In turn, this polynomial dependence allows us to obtain a positive proportion of non-vanishing for subfamilies of Dirichlet $L$-functions where the angle $\theta_\chi$ is restricted to shrinking intervals as $q\to\infty$.

\subsection{Acknowledgments}
I thank Henryk Iwaniec for 
his many ideas, suggestions, and discussions throughout the writing of this manuscript. I would also like to thank Matthew Young and Philippe Michel for their comments and encouragement. This material is based upon work supported by the National Science Foundation Graduate
Research Fellowship Program under Grant No. 2233066. Any opinions, findings,
and conclusions or recommendations expressed in this material are those of the author
and do not necessarily reflect the views of the National Science Foundation.

\section{Setup}
We analyze restricted families of Dirichlet $L$-functions by their root number. Associated to a primitive character $\chi$ mod $q$ is the root number,
\begin{equation*}
    \epsilon(\chi)=\frac{\tau(\chi)}{i^\delta\sqrt{q}},
\end{equation*}
where $\tau(\chi)$ is the standard Gauss sum and
\begin{equation*}
    \delta=\frac{1-\chi(-1)}{2}.
\end{equation*}
This root number appears in the functional equation for the Dirichlet $L$-function, which we recall here.
\begin{prop}
    Let $\chi\pmod{q}$ be a primitive Dirichlet character. We define the completed Dirichlet $L$-function as
    \begin{equation*}
        \Lambda(s,\chi)=q^{s/2}\gamma(s,\chi)L(s,\chi),
    \end{equation*}
    where the gamma factor $\gamma(s,\chi)$ is given by
    \begin{equation*}
        \gamma(s,\chi)=\pi^{-s/2}\Gamma\left(\frac{s+\delta}{2}\right).
    \end{equation*}
    Then, $\Lambda(s,\chi)$ satisfies the functional equation
    \begin{equation*}
        \Lambda(s,\chi)=\epsilon(\chi)\Lambda(1-s,\ol{\chi}).
    \end{equation*}
\end{prop}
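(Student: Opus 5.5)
The functional equation is classical, and I will prove it in the standard way: write $\Lambda(s,\chi)$ as a Mellin transform of a theta function and use Poisson summation to get the transformation law of that theta function under $x\mapsto 1/x$. Throughout, let $\delta\in\{0,1\}$ be as above, so that $\chi(-1)=(-1)^\delta$. Define
\begin{equation*}
    \theta(x,\chi)=\sum_{n\in\Z} n^\delta\chi(n)e^{-\pi n^2x/q},\qquad x>0 .
\end{equation*}
Since $n^\delta\chi(n)$ is even in $n$ and $\chi(0)=0$, we have $\theta(x,\chi)=2\sum_{n\ge1}n^\delta\chi(n)e^{-\pi n^2x/q}$. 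Inserting $\int_0^\infty e^{-\pi n^2x/q}x^{(s+\delta)/2}\,dx/x=(q/\pi n^2)^{(s+\delta)/2}\Gamma((s+\delta)/2)$ and summing over $n$ (valid for $\Re s>1$) gives
\begin{equation*}
    \frac12\int_0^\infty \theta(x,\chi)x^{(s+\delta)/2}\frac{dx}{x}=q^{\delta/2}\,q^{s/2}\pi^{-s/2}\pi^{-\delta/2}\Gamma\Big(\frac{s+\delta}{2}\Big)L(s,\chi)=\Big(\frac{q}{\pi}\Big)^{\delta/2}\Lambda(s,\chi).
\end{equation*}

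The key step is the theta transformation
\begin{equation*}
    \theta(1/x,\chi)=\frac{\tau(\chi)}{i^\delta\sqrt q}\,x^{\delta+1/2}\,\theta(x,\ol\chi)=\epsilon(\chi)\,x^{\delta+1/2}\,\theta(x,\ol\chi).
\end{equation*}
To prove it, use primitivity to write $\chi(n)=\tau(\ol\chi)^{-1}\sum_{a\bmod q}\ol\chi(a)e(an/q)$. This identity holds for all $n$, including $(n,q)>1$, and this is exactly where primitivity is needed. Substitute it into $\theta(1/x,\chi)$ and apply Poisson summation to $n\mapsto n^\delta e^{-\pi n^2/(qx)}e(an/q)$. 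For $\delta=0$ this uses the Fourier transform of the Gaussian. For $\delta=1$ it uses the transform of $ye^{-\pi y^2}$, which is $-i\xi e^{-\pi\xi^2}$ and produces the factor $i^{-1}$. The dual sum collapses to $\sum_m m^\delta\chi(m)e^{-\pi m^2x/q}$ multiplied by a Gauss sum. Finally, $\tau(\chi)\tau(\ol\chi)=\chi(-1)q$ converts the prefactor into $\tau(\chi)/(i^\delta\sqrt q)$.

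With the transformation in hand, split the Mellin integral at $x=1$. The theta function has no constant term (again because $\chi(0)=0$, as $\chi$ is nontrivial), so $\theta(x,\chi)$ decays rapidly as $x\to\infty$. Change variables $x\mapsto1/x$ on $(0,1)$ and apply the transformation. This yields
\begin{equation*}
    \Big(\frac{q}{\pi}\Big)^{\delta/2}\Lambda(s,\chi)=\frac12\int_1^\infty\theta(x,\chi)x^{(s+\delta)/2}\frac{dx}{x}+\frac{\epsilon(\chi)}2\int_1^\infty\theta(x,\ol\chi)x^{(1-s+\delta)/2}\frac{dx}{x},
\end{equation*}
where the exponent in the second integral comes from $-(s+\delta)/2+\delta+1/2=(1-s+\delta)/2$. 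The right side is entire in $s$, so it continues $\Lambda(s,\chi)$ to all of $\C$.

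Now replace $\chi$ by $\ol\chi$ and $s$ by $1-s$ in this formula. Since $\epsilon(\chi)\epsilon(\ol\chi)=\tau(\chi)\tau(\ol\chi)/(i^{2\delta}q)=\chi(-1)(-1)^\delta=1$, we have $\epsilon(\ol\chi)=\epsilon(\chi)^{-1}$. The resulting expression for $\Lambda(1-s,\ol\chi)$ is therefore $\epsilon(\chi)^{-1}$ times the right side above, which gives $\Lambda(s,\chi)=\epsilon(\chi)\Lambda(1-s,\ol\chi)$.

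The only delicate step is the Poisson computation: getting the factor $i^{-\delta}$ and the conjugation of the character right. For that I would just check the Gaussian transform and the Gauss sum evaluation carefully.
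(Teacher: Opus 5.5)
The paper does not prove this proposition; it recalls it as the classical functional equation. Your theta-function and Poisson argument is the standard proof, and it is correct. The Mellin normalization $(q/\pi)^{\delta/2}\Lambda(s,\chi)$, the exponent $(1-s+\delta)/2$ after $x\mapsto 1/x$, the entire integral representation, and the final symmetrization using $\epsilon(\chi)\epsilon(\ol\chi)=1$ all check out.

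Three small bookkeeping points:

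\begin{enumerate}
\item The sentence saying the dual sum collapses to $\sum_m m^\delta\chi(m)e^{-\pi m^2x/q}$ should have $\ol\chi(m)$, as your displayed transformation law correctly has. After expanding $\chi(n)=\tau(\ol\chi)^{-1}\sum_a\ol\chi(a)e(an/q)$, the dual variable $k=a-qm$ carries $\ol\chi(a)=\ol\chi(k)$.

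\item In this route, the $-i$ from the transform of $ye^{-\pi y^2}$ is evaluated at $m-a/q=-k/q$. So Poisson summation actually gives $\tau(\ol\chi)^{-1}\cdot i\,q^{1/2}x^{3/2}\theta(x,\ol\chi)$. The relation $\tau(\ol\chi)^{-1}=\chi(-1)\tau(\chi)/q=-\tau(\chi)/q$ then converts this to $\tau(\chi)/(i\sqrt q)$. The net factor $i^{-\delta}$ is right, but the two sign flips should be recorded. Alternatively, split $n$ into residue classes mod $q$ and apply Poisson in each class. That gives $\tau(\chi)\ol\chi(k)$ directly, the $-i$ is then the final factor, and you never need $\tau(\chi)\tau(\ol\chi)=\chi(-1)q$.

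\item You use $\chi(0)=0$, i.e.\ $q>1$. For $q=1$ the theta function has a constant term and $\Lambda$ has poles at $s=0,1$, which requires the usual extra terms. This does not matter here, since the paper only uses prime $q$.
\end{enumerate}
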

The root number has absolute value $1$, and can thus be uniquely written as
\begin{equation*}
    \epsilon(\chi)=e(\theta_\chi)
\end{equation*}
for $\theta_\chi\in\R/\Z$.
We are interested in estimating mollified moments with restrictions on the root number
\begin{equation}\label{eq: restricted moments}
    \evensumrest M(\chi)L(1/2,\chi),\qquad \evensumrest|M(\chi)L(1/2,\chi)|^2.
\end{equation}
We can smooth out this problem, letting $f:\R/\Z\to\C$ be a smooth function, and considering the smoothed moments
\begin{equation*}
    \evensum f(\theta_\chi) M(\chi)L(1/2,\chi),\qquad\evensum f(\theta_\chi)|M(\chi)L(1/2,\chi)|^2,
\end{equation*}
which approximate the restricted moments in \eqref{eq: restricted moments} if $f$ approximates the indicator function of an interval. By the Fourier expansion of $f$, 
\begin{equation*}
    f(\theta)=\sum_{k\in\Z} c_ke(\theta)^k,
\end{equation*}
the study of these smoothed moments reduces to studying mollified moments twisted by powers of the root number,
\begin{equation*}\label{weighted moments equation*}
    \evensum\epsilon(\chi)^k M(\chi)L(1/2,\chi),\qquad \evensum\epsilon(\chi)^k|M(\chi)L(1/2,\chi)|^2.
\end{equation*}
A handy tool for computing these mollified moments is the approximate functional equation.
\begin{thm}[Approximate functional equation, \cite{IK04}, Theorem 5.3]
Let $\chi\negthickspace\pmod{q}$ be a primitive Dirichlet character. Let $G(u)$ be a holomorphic function bounded on vertical strips, with the normalization $G(0)=1$. Then at the central point $s=1/2$,
\begin{equation*}
    L(1/2,\chi)=\sum_{n=1}^\infty \frac{\chi(n)}{\sqrt{n}}V\left(\frac{n}{X\sqrt{q}}\right)+\epsilon(\chi)\sum_{n=1}^\infty\frac{\ol{\chi}(n)}{\sqrt{n}}V\left(\frac{nX}{\sqrt{q}}\right),
\end{equation*}
where $V(y)$ is a smooth function defined by
\begin{equation*}
    V(y)=\frac{1}{2\pi i}\int_{(2)}y^{-u}G(u)\frac{\gamma(1/2+u,\chi)}{\gamma(1/2,\chi)}\,\frac{du}{u}.
\end{equation*}
\end{thm}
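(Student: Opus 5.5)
The plan is the standard contour-shift argument applied to the completed $L$-function $\Lambda(s,\chi)$ and its functional equation. I assume, as in \cite{IK04}, that $G$ is even, $G(-u)=G(u)$; otherwise the second $V$ must be built from $G(-u)$ instead of $G(u)$. I take $X>0$ and $\chi$ primitive with $q>1$, so that $\Lambda(s,\chi)$ is entire.

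First I would consider the integral
\begin{equation*}
    I(X)=\frac{1}{2\pi i}\int_{(2)} X^{u}\,\Lambda(1/2+u,\chi)\,G(u)\,\frac{du}{u}.
\end{equation*}
Since $G$ is bounded on vertical strips and, by Stirling, $\gamma(1/2+u,\chi)$ decays exponentially in $|\Im u|$, while $L(1/2+u,\chi)$ grows at most polynomially in vertical strips (Phragm\'en--Lindel\"of), the integrand decays rapidly. I can therefore shift the contour from $\Re u=2$ to $\Re u=-2$. The only pole crossed is the simple pole at $u=0$, with residue $G(0)\Lambda(1/2,\chi)=\Lambda(1/2,\chi)$. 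Hence
\begin{equation*}
    \Lambda(1/2,\chi)=I(X)-\frac{1}{2\pi i}\int_{(-2)}X^{u}\Lambda(1/2+u,\chi)G(u)\,\frac{du}{u}.
\end{equation*}

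Next, in the remaining integral I substitute $u\mapsto -u$, which turns it into $-\frac{1}{2\pi i}\int_{(2)}X^{-u}\Lambda(1/2-u,\chi)G(-u)\frac{du}{u}$. I then apply the functional equation $\Lambda(1/2-u,\chi)=\epsilon(\chi)\Lambda(1/2+u,\ol\chi)$ and use $G(-u)=G(u)$. This gives
\begin{equation*}
    \Lambda(1/2,\chi)=I(X)+\epsilon(\chi)\,\frac{1}{2\pi i}\int_{(2)}X^{-u}\Lambda(1/2+u,\ol\chi)G(u)\,\frac{du}{u}.
\end{equation*}

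On $\Re u=2$ both Dirichlet series converge absolutely, so I expand $L(1/2+u,\chi)=\sum_n\chi(n)n^{-1/2-u}$ and interchange sum and integral. Writing $q^{(1/2+u)/2}=q^{1/4}q^{u/2}$, I divide both sides by $q^{1/4}\gamma(1/2,\chi)$. The first term becomes
\begin{equation*}
    \sum_n\frac{\chi(n)}{\sqrt n}\,\frac{1}{2\pi i}\int_{(2)}\left(\frac{n}{X\sqrt q}\right)^{-u}G(u)\frac{\gamma(1/2+u,\chi)}{\gamma(1/2,\chi)}\frac{du}{u}=\sum_n\frac{\chi(n)}{\sqrt n}V\left(\frac{n}{X\sqrt q}\right).
\end{equation*}
The second term becomes the same expression with $\ol\chi$ in place of $\chi$ and $nX/\sqrt q$ in place of $n/(X\sqrt q)$, multiplied by $\epsilon(\chi)$. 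Here $\gamma(s,\ol\chi)=\gamma(s,\chi)$ because $\chi$ and $\ol\chi$ have the same parity $\delta$, so the same $V$ appears in both sums. This yields the stated formula.

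The only step requiring care is justifying the contour shift. This rests on the exponential decay of the gamma ratio combined with the polynomial growth of $L$ in the strip $-2\le\Re u\le 2$, which is routine. Otherwise the argument is bookkeeping, the main pitfall being the evenness of $G$ when matching the two $V$'s.
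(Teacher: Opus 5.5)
Your proposal is correct and is the standard contour-shift argument from \cite{IK04}, Theorem 5.3, which is the proof the paper relies on (it cites that theorem rather than reproving it). You are also right that evenness of $G$ is needed for the same $V$ to appear in both sums. \cite{IK04} assumes it, but the paper's statement omits it.
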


\begin{rmk}
    For the analysis in this paper, we choose $G(u)$ to be of the form $P(u)\exp(u^2)$, where $P(u)$ vanishes at the poles of $\gamma(1/2+u,\chi)$ with $|\Re(u)|\leq 2$. With this choice, we get the following bounds for $V(y)$ and its derivatives.
\end{rmk}

\begin{prop}[\cite{IK04}, Proposition 5.4]
Let $V(y)$ be as defined above. Then, for any $A>0$, the derivatives of $V(y)$ satisfy
    \begin{align}
        y^aV_s^{(a)}(y)&\ll_A (1+y)^{-A},\nonumber \\
        y^aV_s^{(a)}(y)&=\delta_a+O(y^{2}),\label{eq: bounds on V}
    \end{align}
    where $\delta_0=1$ and $\delta_a=0$ otherwise.
\end{prop}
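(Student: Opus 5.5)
The plan is to work directly with the Mellin--Barnes integral defining $V(y)$ and to move the line of integration, using the rapid decay of $G(u)=P(u)\exp(u^2)$ on vertical lines to control everything. First I will differentiate under the integral sign. Since $y^a\frac{d^a}{dy^a}y^{-u}=(-u)(-u-1)\cdots(-u-a+1)\,y^{-u}$, we get
\begin{equation*}
    y^aV^{(a)}(y)=\frac{1}{2\pi i}\int_{(2)}y^{-u}\,p_a(u)\,G(u)\frac{\gamma(1/2+u,\chi)}{\gamma(1/2,\chi)}\,\frac{du}{u},
\end{equation*}
where $p_a(u)=(-u)(-u-1)\cdots(-u-a+1)$ is a polynomial of degree $a$, with $p_0=1$. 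On any vertical line $\Re u=\sigma$, Stirling's formula gives polynomial growth in $|\Im u|$ for $\gamma(1/2+u,\chi)/\gamma(1/2,\chi)$, uniformly for $\sigma$ in compact sets away from the poles. Since $|\exp(u^2)|=\exp(\sigma^2-t^2)$, the whole integrand is absolutely integrable and the contour may be shifted freely, with the horizontal connecting segments contributing nothing in the limit. Differentiation under the integral is justified by the same domination.

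For the decay bound $y^aV^{(a)}(y)\ll_A(1+y)^{-A}$ I split by the size of $y$. When $y\ge1$, I shift the contour to $\Re u=A$. No poles are crossed, since the integrand is holomorphic for $\Re u>0$. On that line $|y^{-u}|=y^{-A}$, and the remaining integral is $O_A(1)$ by the Gaussian decay. When $y\le1$, the trivial bound on $\Re u=2$ already gives $O(y^{-2})$, which is not good enough, so that range is handled together with the asymptotic below. The resulting bound there is $O(1)$, which suffices.

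For the asymptotic $y^aV^{(a)}(y)=\delta_a+O(y^2)$ I shift to the left, to $\Re u=-2$. The possible poles in the strip $-2\le\Re u\le2$ are the simple pole at $u=0$ coming from $1/u$, and the poles of $\Gamma((1/2+u+\delta)/2)$. The latter lie at $u=-1/2-\delta-2n$, which in this strip means $u=-1/2$ or $u=-3/2$. By the choice of $P$, these are cancelled by zeros of $G$, so the only residue is at $u=0$. That residue equals $p_a(0)G(0)\cdot1=p_a(0)$. This is $1$ for $a=0$, using $G(0)=1$, and $0$ for $a\ge1$, because $p_a$ has the factor $-u$; this gives exactly $\delta_a$. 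On the line $\Re u=-2$ we have $|y^{-u}|=y^2$, and the integral converges absolutely, giving $O(y^2)$. The gamma ratio is regular there because $-2+1/2=-3/2$ is not a pole, since the poles of $\Gamma((s+\delta)/2)$ sit at $s=-\delta-2n$. For $y\le1$ this also yields the $O(1)$ bound needed in the previous paragraph.

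The only delicate point is bookkeeping: checking that $P$ does annihilate every gamma pole crossed, and that the uniform Stirling bound on the shifted lines keeps the implied constants depending only on $A$ and $a$. Everything else is routine.
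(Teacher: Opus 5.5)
Your proposal is correct. It is essentially the standard contour-shifting proof of \cite[Proposition 5.4]{IK04}, which the paper cites rather than reproving: you shift to $\Re u=A$ to get the decay for $y\ge1$, and to $\Re u=-2$ to get the asymptotic, where the only residue is $p_a(0)=\delta_a$ at $u=0$ because the zeros of $P$ cancel the gamma pole at $u=-1/2$ (even case) or $u=-3/2$ (odd case).
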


The definition of the root number $\epsilon(\chi)$ and the smooth function $V(y)$ depend on whether $\chi$ is an even or odd character. Thus, when evaluating moments over the full family of primitive characters $\chi\pmod{q}$, we will take moments over the even and odd characters separately. In this paper, we will provide proofs for the subfamily of even primitive characters; the arguments for odd primitive characters proceed in a near-identical manner. Since we want to consider even primitive characters separately, we need a version of orthogonality of characters over this subfamily.
\begin{prop}[\cite{IK04}, Chapter 3]\label{prop: orthogonality of characters}
Let $q$ be a positive integer $m\in\Z$ such that $(m,q)=1$. Then
\begin{align*}
    \primsum \chi(m)&=\sum_{\substack{vw=q\\m\equiv1\shortmod{w}}}\mu(v)\varphi(w),\\
    \evensum \chi(m)&=\frac{1}{2}\sum_{\substack{vw=q\\m\equiv1\shortmod{w}}}\mu(v)\varphi(w)+\frac{1}{2}\sum_{\substack{vw=q\\m\equiv-1\shortmod{w}}}\mu(v)\varphi(w).
\end{align*}
\end{prop}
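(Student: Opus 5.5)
The plan is to derive both identities from the standard orthogonality relation for all characters mod $q$. First I will handle the sum over all primitive characters by M\"obius inversion over the conductor. Then I will isolate the even ones using the projector $\frac{1}{2}(1+\chi(-1))$.

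\textbf{Primitive characters.} Fix $(m,q)=1$. For each divisor $w\mid q$, every character $\psi$ mod $w$ is induced by a unique primitive character $\chi$ of some conductor $d\mid w$. Since $(m,q)=1$, we have $\psi(m)=\chi(m)$. Orthogonality mod $w$ then gives
\begin{equation*}
    \sum_{d\mid w}\ \sideset{}{^*}\sum_{\chi\shortmod{d}}\chi(m)=\sum_{\psi\shortmod{w}}\psi(m)=\varphi(w)\,\mathbf{1}_{m\equiv 1\shortmod{w}}.
\end{equation*}
Write $g(d)=\sum^*_{\chi\shortmod{d}}\chi(m)$ and $F(w)=\varphi(w)\mathbf{1}_{m\equiv1\shortmod w}$, so that $F=g*1$ on the divisors of $q$. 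M\"obius inversion gives $g(q)=\sum_{vw=q}\mu(v)F(w)$, which is the first identity. The only points needing care are that the coprimality $(m,q)=1$ makes the induced character agree with the primitive one at $m$, and that the correspondence between characters mod $w$ and primitive characters of conductor dividing $w$ is a bijection.

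\textbf{Even primitive characters.} A primitive $\chi$ is even exactly when $\chi(-1)=1$, so $\frac{1}{2}(1+\chi(-1))$ equals $1$ on even characters and $0$ on odd ones. Hence
\begin{equation*}
    \evensum\chi(m)=\frac12\primsum\chi(m)+\frac12\primsum\chi(-m).
\end{equation*}
Applying the first identity to $m$ and to $-m$ (both coprime to $q$) gives the second formula, since $-m\equiv1\shortmod w$ is the same as $m\equiv-1\shortmod w$.

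No step is a real obstacle; the argument is the textbook one from \cite{IK04}.
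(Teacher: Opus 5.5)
Your proposal is correct and is essentially the standard argument that the paper defers to by citing \cite{IK04}. The paper itself gives no proof. You apply M\"obius inversion over the divisors of $q$ to the relation $\sum_{\psi \bmod w}\psi(m)=\sum_{d\mid w}\sum^*_{\chi \bmod d}\chi(m)$, then insert the parity projector $\frac12(1+\chi(-1))$ and apply the first identity to both $m$ and $-m$, and each step checks out.
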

Throughout the paper, we will let
\begin{equation*}
    \varphi^*(q)=\sum_{vw=q}\mu(v)\varphi(w)
\end{equation*}
denote the number of primitive characters mod $q$. We let $\varphi^+(q)$ denote the number of primitive even characters. From Proposition \ref{prop: orthogonality of characters}, we see that $\varphi^+(q)=\frac{1}{2}\varphi^*(q)+O(1)$.
\begin{rmk}
    In the version of orthogonality of characters for even primitive characters, there are two sums, one over $m\equiv1\pmod{w}$ and another over $m\equiv-1\pmod{w}$. For simplicity of notation, we write
    \begin{equation*}
        \evensum \chi(m)=\frac{1}{2}\sum_{\substack{vw=q\\m\equiv\pm 1\shortmod{w}}}\mu(v)\varphi(w).
    \end{equation*}
\end{rmk}

\section{Weighted moments}
In this section, we prove Theorems \ref{thm: first weighted moment} and \ref{thm: second weighted moment}, which provide estimates for the first and second moments weighted by characters and powers of the root number.
\subsection{Weighted first moments}
We begin by evaluating estimates for the weighted first moment
\begin{equation*}
\cA(m,k)=\evensum\chi(m)\epsilon(\chi)^kL(1/2,\chi),
\end{equation*}
where $(m,q)=1$ and $m<q$.
We can split $\cA(m,k)$ into two sums using the approximate functional equation,
\begin{align*}
\cA(m,k)&=\sum_{n=1}^\infty \frac{1}{\sqrt{n}}V\left(\frac{n}{X\sqrt{q}}\right)\evensum \chi(m)\chi(n)\epsilon(\chi)^k+\sum_{n=1}^\infty \frac{1}{\sqrt{n}}V\left(\frac{nX}{\sqrt{q}}\right)\evensum \chi(m)\ol{\chi}(n)\epsilon(\chi)^{k+1}\\
&=:\cA_1(m,k)+\cA_2(m,k),
\end{align*}
where $X>0$ will be chosen later to depend on $k$.
We will first evaluate the sums where the root number disappears, $\cA_1(m,0)$ and $\cA_2(m,-1)$. By orthogonality of characters, we get that
\begin{align*}
    \cA_1(m,0)&=\frac{1}{2}\sum_{vw=q}\mu(v)\varphi(w)\underset{mn\equiv\pm1\shortmod{w}}{\primsumempty}\frac{1}{\sqrt{n}}V\left(\frac{n}{X\sqrt{q}}\right)\\
    &=\frac{1}{2}\delta(m)\varphi^*(q)V\left(\frac{1}{X\sqrt{q}}\right)+ \frac{1}{2}\sum_{vw=q}\mu(v)\varphi(w)\sum_{\ell=1}^\infty\underset{{mn=\ell w\pm 1}}{\primsumempty}\frac{1}{\sqrt{n}}V\left(\frac{n}{X\sqrt{q}}\right),
\end{align*}
where $\delta(m)$ is the Kronecker delta symbol,
\begin{align*}
    \delta(m)=
    \begin{cases}
        1,\quad &m=1,\\
        0,\quad&\textrm{otherwise}.
    \end{cases}
\end{align*}
We use $\primsumempty$ to denote that the summation is over integers coprime to $q$.
Since $(m,q)=1$, we also have $(m,w)=1$, so if there exists a solution to $mn=\ell w+1$ and $mn'=\ell' w+1$, then $m\mid (\ell-\ell')$. Using the fact that $V(y)\ll (1+y)^{-1}$, a decreasing function, we have that our remaining sum is bounded by
\begin{align*}
&\sum_{w\mid q}w\sum_{\ell=0}^\infty\frac{\sqrt{m}}{\sqrt{(1+m\ell)w}}\left(1+\frac{(1+m\ell)w}{mX\sqrt{q}}\right)^{-1}\ll \sum_{w\mid q}\left(\sqrt{wm}+\sum_{\ell=1}^\infty\frac{\sqrt{w}}{\sqrt{\ell}}\left(1+\frac{\ell w}{X\sqrt{q}}\right)^{-1}\right).
\end{align*}
Splitting the sum into $\ell\leq X\sqrt{q}/w$ and $\ell>X\sqrt{q}/w$, we have
\begin{equation*}
    \cA_1(m,0)=\delta(m)\varphi^+(q)+O\left(\frac{q^{3/4}}{X^{1/2}}+\sqrt{qm}\tau(q)+q^{1/4}X^{1/2}\tau(q)\right).
\end{equation*}
Evaluating $\cA_2(m,-1)$ is similar, and we obtain
\begin{equation*}
    \cA_2(m,-1)=\frac{\varphi^+(q)}{\sqrt{m}}+O\left(q^{3/4}X^{1/2}+\frac{q^{1/4}}{X^{1/2}}\tau(q)\right).
\end{equation*}
We now evaluate the sums $\cA_1(m,k)$ and $\cA_2(m,k)$ when the root numbers do not disappear. Let's first  evaluate $\cA_1(m,k)$ for $k>0$. Expanding $\epsilon(\chi)$ into a Gauss sum, we get
\begin{align*}
    \cA_1(m,k)&=\frac{1}{2q^{k/2}}\sum_{vw=q}\mu(v)\varphi(w)\underset{n\ }{\primsumempty}\frac{1}{\sqrt{n}}V\left(\frac{n}{X\sqrt{q}}\right)\underset{\substack{t_1,\ldots,t_k\\t_1\cdots t_k\equiv\pm\ol{mn}\shortmod{w}}}{\primsumempty}e\left(\frac{t_1+\cdots+t_k}{q}\right)\nonumber\\
    &=\frac{1}{2\sqrt{q}}\sum_{vw=q}\mu(v)\varphi(w)\underset{n\ }{\primsumempty}\frac{1}{\sqrt{n}}V\left(\frac{n}{X\sqrt{q}}\right)\underset{\substack{t\shortmod{q}\\t\equiv\pm\ol{mn}\shortmod{w}}}{\primsumempty}\Kl_k(t;q)
    \label{equation with sum of Kloosterman sums}.
\end{align*}
The sum over $t$ has $\varphi(q)/\varphi(w)$ terms. We then use the Deligne bound for hyper-Kloosterman sums, which gives that $|\Kl_k(t;q)|\leq k^{\omega(q)}$. Thus, our sum is bounded by
\begin{equation*}
    \cA_1(m,k)
    \ll k^{\omega(q)}\tau(q)X^{1/2}q^{3/4}.
\end{equation*}
Applying the same steps yields an identical bound for $k<0$. For the dual sums $\cA_2(m,k)$, $k\neq-1$, we get
\begin{equation*}
    \cA_2(m,k)\ll |k+1|^{\omega(q)}\tau(q)X^{-1/2}q^{3/4}.
\end{equation*}
To minimize the error terms, we take $X=q^{1/2}$ when $k=0$, $X=q^{-1/2}$ when $k=-1$, and $X=1$ otherwise. This completes the proof of Theorem \ref{thm: first weighted moment}.

\subsection{Weighted second moments}
In this subsection, we prove Theorem \ref{thm: second weighted moment}. We want to evaluate the second moment weighted by $\chi(m_1)\ol{\chi}(m_2)\epsilon(\chi)^k$:
\begin{equation*}
    \cB(m_1,m_2,k)=\evensum \chi(m_1)\ol{\chi}(m_2)\epsilon(\chi)^k|L(1/2,\chi)|^2.
\end{equation*}
The case $k=0$ reduces to sums considered by Iwaniec and Sarnak in \cite{IS99}.
They obtain the bound
\begin{equation*}
    \cB(m_1,m_2,0)=\frac{\varphi^+(q)\varphi(q)}{q\sqrt{m_1m_2}}
\log\frac{L^2}{m_1m_2}+E,
\end{equation*}
where $L$ is defined by
\begin{equation*}
    \log L=\frac{1}{2}\log\frac{q}{\pi}+\frac{1}{2}\psi\left(\frac{1}{4}\right)+\gamma+\eta(q),
\end{equation*}
with $\psi(s)=\Gamma'(s)/\Gamma(s)$ and
\begin{equation*}
    \eta(q)=\sum_{p\mid q}\frac{\log p}{p-1}.
\end{equation*}
The term $E$ denotes the error term, which is provided in \cite{IS99}. This error terms cannot be bounded well individually, but summation over the variables $m_1$ and $m_2$ introduced by the mollifier will allow for a good bound on average.

Now, assume that $k\geq2$, and $q$ is prime. The methods used below for $k\geq2$ will produce identical bounds for $k\leq -2$. We apply an asymmetric approximate functional equation for $L(1/2,\chi)$ and $L(1/2,\ol{\chi})$. This gives
\begin{align*}
    L(1/2,\chi)&=\sum_n\frac{\chi(n)}{\sqrt{n}}V\left(\frac{nX}{\sqrt{q}}\right)+\epsilon(\chi)\sum_n\frac{\ol{\chi}(n)}{\sqrt{n}}V\left(\frac{n}{X\sqrt{q}}\right),\\
    L(1/2,\ol{\chi})&=\sum_n\frac{\ol{\chi}(n)}{\sqrt{n}}V\left(\frac{n}{X\sqrt{q}}\right)+\epsilon(\ol\chi)\sum_n\frac{{\chi}(n)}{\sqrt{n}}V\left(\frac{nX}{\sqrt{q}}\right),
\end{align*}
where $X=q^\theta$ is to be chosen later.
Substituting these expressions into $\cB(m_1,m_2,k)$ and applying orthogonality of characters gives
\begin{equation}\label{AFE sum decomp}
    \cB(m_1,m_2,k)=\cB_1(m_1,m_2,k)+\cB_2(m_1,m_2,k)+\cB_3(m_1,m_2,k)+\cB_4(m_1,m_2,k)+O\left(k(X+X^{-1})q^{1/2+\epsilon}\right),
\end{equation}
where
\begin{align*}
    \cB_1(m_1,m_2,k)&=\frac{\varphi(q)}{2\sqrt{q}}\sum_{n_1}\sum_{n_2}\frac{1}{\sqrt{n_1n_2}}V\left(\frac{n_1X}{\sqrt{q}}\right)V\left(\frac{n_2X}{\sqrt{q}}\right)\Kl_{k-1}(\pm \ol{m_1}m_2\ol{n_1}\ol{n_2};q),\\
    \cB_2(m_1,m_2,k)&=\frac{\varphi(q)}{2\sqrt{q}}\sum_{n_1}\sum_{n_2}\frac{1}{\sqrt{n_1n_2}}V\left(\frac{n_1X}{\sqrt{q}}\right)V\left(\frac{n_2}{X\sqrt{q}}\right)\Kl_{k}(\pm \ol{m_1}m_2\ol{n_1}n_2;q),\\
    \cB_3(m_1,m_2,k)&=\frac{\varphi(q)}{2\sqrt{q}}\sum_{n_1}\sum_{n_2}\frac{1}{\sqrt{n_1n_2}}V\left(\frac{n_1}{X\sqrt{q}}\right)V\left(\frac{n_2X}{\sqrt{q}}\right)\Kl_{k}(\pm \ol{m_1}m_2{n_1}\ol{n_2};q),\\
    \cB_4(m_1,m_2,k)&=\frac{\varphi(q)}{2\sqrt{q}}\sum_{n_1}\sum_{n_2}\frac{1}{\sqrt{n_1n_2}}V\left(\frac{n_1}{X\sqrt{q}}\right)V\left(\frac{n_2}{X\sqrt{q}}\right)\Kl_{k+1}(\pm \ol{m_1}m_2{n_1}{n_2};q),
\end{align*}
where $\Kl_k(\pm x;q)$ denotes $\Kl_k(x;q)+\Kl_k(-x;q)$.
In each of the $\cB_i(m_1,m_2,k)$, the argument of the hyper-Kloosterman sum depends on either $n_1$ or its inverse, and the same for $n_2$. We will call variables that appear in the argument of the hyper-Kloosterman sum with an inverse ``inverted variables" and those that appear without an inverse ``non-inverted variables".

To bound these sums, we take advantage of work done by Fouvry, Kowalski, and Michel bounding bilinear forms with Kloosterman sums. In \cite{FKM14}, the authors obtain a power-saving bound for smoothed sums with trace weights. There, they consider smoothing functions $V(x)$ supported on $[1/2,\,2]$ satisfying
\begin{equation}\label{eq: smoothing condition}
    x^j V^{(j)}(x)\ll_j Q_V^j.
\end{equation}
For convenience, we restate their result on smoothed (type $I_2$) sums.
\begin{thm}[\cite{FKM14}, Theorem 1.16]\label{thm: FKM smoothed sums}
Let $K$ be an isotypic trace weight associated to an $\ell$-adic sheaf $\cF$ modulo $q$ prime. Let $N_1,N_2$ be positive such that $X/4\leq N_1N_2\leq X$. Let $U$, $V$, and $W$ be smooth functions supported on $[1/2,\,2]$ satisfying condition \eqref{eq: smoothing condition} with respective parameters $Q_U$, $Q_V$, and $Q_W$ all $\geq1$. Then, we have
\begin{equation}\label{FKM smoothed bound}
    \sum_{n_1,n_2}K(n_1n_2)\Big(\frac{n_1}{n_2}\Big)^{it}U\Big(\frac{n_1}{N_1}\Big)V\Big(\frac{n_2}{N_2}\Big)W\Big(\frac{n_1n_2}{X}\Big)\ll\cond(\cF)^{A}(1+|t|)^{B}(Q_U+Q_V)^{C}Q_W X\Big(1+\frac{q}{X}\Big)^{1/2}q^{-\eta},
\end{equation}
for any $\eta<1/8$, where $B,C\geq 1$ are constants depending only on $\eta$ and $A$ is absolute. The implicit constant depends only on $\eta$ and the implicit constants in \eqref{eq: smoothing condition}.
    
\end{thm}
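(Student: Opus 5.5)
Since this is a restatement of \cite{FKM14}, Theorem 1.16, I would not try to build a new argument. Instead I would follow the structure of the Fouvry--Kowalski--Michel proof, treating it as a combination of two ingredients: completion via Poisson summation, and their bilinear (type II) estimate for trace functions.

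\textbf{Step 1: separate the variables.} The coupling weight $W(n_1n_2/X)$ can be removed by Mellin inversion,
\begin{equation*}
W(y)=\frac{1}{2\pi i}\int_{(0)}\widehat W(s)y^{-s}\,ds,
\end{equation*}
where repeated integration by parts gives $\widehat W(it)\ll_j (Q_W/(1+|t|))^j$. Essentially only $|\Im s|\ll Q_W q^{\epsilon}$ contributes. The factors $(n_1/n_2)^{it}$ and $(n_1n_2)^{-s}$ are absorbed into new smooth weights $U_1(x)=U(x)x^{it-s}$ and $V_1(x)=V(x)x^{-it-s}$. These satisfy \eqref{eq: smoothing condition} with parameters $\ll Q_U+Q_V+|t|+Q_W$. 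This is where the polynomial losses $(1+|t|)^B$ and $(Q_U+Q_V)^C$ enter, and the linear factor $Q_W$ comes from the length of the $s$-integration. We are reduced to bounding
\begin{equation*}
S=\sum_{n_1,n_2}K(n_1n_2)U_1(n_1/N_1)V_1(n_2/N_2)
\end{equation*}
by $X(1+q/X)^{1/2}q^{-\eta}$, up to these polynomial losses. By symmetry I would assume $N_1\geq N_2$.

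\textbf{Step 2: the long smooth variable.} Suppose $N_1\geq q^{1/2+\delta}$. For each fixed $n_2$, apply Poisson summation in $n_1$ modulo $q$. This gives
\begin{equation*}
\sum_{n_1}K(n_1n_2)U_1(n_1/N_1)=\frac{N_1}{\sqrt q}\sum_h \widehat K(h\ol{n_2})\widehat U_1(hN_1/q).
\end{equation*}
The $h=0$ term is bounded by $N_1 q^{-1/2}\,|\widehat K(0)|$. The remaining terms decay rapidly once $|h|>qQ/N_1$. Since $\widehat K$ is bounded by $\cond(\cF)^{O(1)}$ (isotypic and Fourier, by Deligne), the inner sum is $\ll \cond(\cF)^{A}Q\,(N_1q^{-1/2}+q^{1/2})$. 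Summing trivially over $n_2$ gives
\begin{equation*}
S\ll N_1N_2\,(q^{-1/2}+q^{1/2}/N_1),
\end{equation*}
which is a power saving in this range. In the degenerate case where $K$ is $e(an/q)$-like, $\widehat K$ is a delta function and the bound is immediate, but isotypic non-exceptional sheaves exclude this.

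\textbf{Step 3: the balanced range, which is the main obstacle.} When $N_2\leq N_1\leq q^{1/2+\delta}$, treat $S$ as a general bilinear form $\sum\alpha_m\beta_nK(mn)$ with $\|\alpha\|,\|\beta\|$ of size $\sqrt{N_1},\sqrt{N_2}$. I would apply the type II bound of \cite{FKM14}. This is proved by Cauchy--Schwarz in $m$ and expansion of the square, which reduces to completing correlation sums $\sum_x K(xn)\ol K(xn')$. For $n\neq n'$ the correlation is $\ll q^{1/2}$ by the Riemann Hypothesis, since the sheaf $[\times n]^*\cF$ is not geometrically isomorphic to $[\times n']^*\cF$ when $\cF$ is not exceptional. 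The outcome is the bound
\begin{equation*}
\ll \cond(\cF)^A\|\alpha\|\|\beta\|(MN)^{1/2}\Big(\frac1M+\frac{q^{1/2}\log q}{N}\Big)^{1/2}.
\end{equation*}
This saves a power whenever both variables lie between $q^{\delta'}$ and $q^{1/2+\delta}$ in a suitable window. For $N_1N_2$ near $q$, choosing the Cauchy--Schwarz variable appropriately yields a saving of $q^{-1/8+\epsilon}$; balancing this against Step 2 is exactly what produces the threshold $\eta<1/8$. The genuinely delicate range is when $N_2$ is tiny, below $q^{\delta'}$. There the type II bound gives nothing, so $N_1\asymp X/N_2$ must be long. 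If $X\geq q^{1/2+2\delta}$, Step 2 applies. If not, the claimed bound $X(q/X)^{1/2}q^{-\eta}=\sqrt{Xq}\,q^{-\eta}$ is at least $Xq^{-\eta}\cdot q^{\delta}$, and this is already covered by the trivial bound together with the $(1+q/X)^{1/2}$ factor, provided $\delta$ is chosen in terms of $\eta$. Keeping track of how the conductor of the correlation sheaves depends polynomially on $\cond(\cF)$ then gives the factor $\cond(\cF)^A$ with $A$ absolute.
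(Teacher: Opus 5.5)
\textbf{There is a genuine gap in Step 3, exactly where the theorem has its content.} Take $X=q$ and $N_1=q^{1/2+\sigma}$, $N_2=q^{1/2-\sigma}$ with $\sigma\ge 0$ small.
\begin{itemize}
\item Poisson in $n_1$ (your Step 2) saves only $q^{1/2}/N_1=q^{-\sigma}$, since the dual sum has length $q/N_1=N_2$.
\item The type II estimate you quote saves $(1/M+q^{1/2}\log q/N)^{1/2}$, where $N$ is the completed variable. The best choice $M=N_2$, $N=N_1$ gives about $q^{-\sigma/2}$ up to logarithms. At $N_1=N_2=\sqrt q$ it is worse than trivial.
\item Keeping both smooth weights does not help. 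Cauchy--Schwarz in $n_2$ followed by completing the $n_2$-sum gives $N_1^{1/2}N_2+N_1N_2^{1/2}q^{1/4}$, which is $\ll q^{3/4}+q$ at $N_1=N_2=\sqrt q$, so again no saving.
\end{itemize}
So the combined saving tends to $q^{0}$ as both variables approach $\sqrt q$. The $q^{-1/8}$ you extract from type II occurs only when $\min(N_1,N_2)\approx q^{1/4}$. It is not true that balancing type II against P\'olya--Vinogradov produces the threshold $\eta<1/8$. The range $N_1\approx N_2\approx\sqrt X$ with $q^{1-2\eta}\ll X\ll q^{1+2\eta}$ is the P\'olya--Vinogradov barrier: Voronoi summation sends $\sum_n K(n)d(n)W(n/q)$ to a sum of the same shape and length. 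No combination of completion and Cauchy--Schwarz with the correlation sums $\sum_x K(nx)\ol{K(n'x)}$ gets past it. Conversely, the range you call delicate, $N_2$ tiny, is easy: there $N_1\approx X/N_2$ is long and Step 2 applies.

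\textbf{The missing input is automorphic, and this is the route the paper's remark on conductor dependence points to.} In Fouvry--Kowalski--Michel's proof the Mellin step runs the other way. You Mellin-invert $U$ and $V$ rather than $W$. Then you write $n_1^{-iu_1}n_2^{-iu_2}(n_1/n_2)^{it}$ as a power of $n_1n_2$ times $(n_1/n_2)^{i\tau}$. This reduces the problem to sums $S_{V,X}(i\tau,K)=\sum_n K(n)d_{i\tau}(n)V(n/X)$ with $|\tau|\ll (1+|t|+Q_U+Q_V)q^{\epsilon}$.

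Since $d_{i\tau}(n)$ are the Hecke eigenvalues of the Eisenstein series $E(z,1/2+i\tau)$, these are twists of a modular form by a trace function. They are bounded by the FKM amplification method: an amplified second moment over automorphic forms of level $q$. Its algebraic input is that the correlation sums $\sum_x \widehat K(\gamma\cdot x)\ol{\widehat K(x)}$ are $\ll M\sqrt q$ for all $\gamma\in\mathrm{PGL}_2(\mathbb F_q)$ outside a structured exceptional set (the $(q,M)$-good property). This covers far more than the diagonal $\gamma$ in your type II argument.

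The exponent $1/8$ comes from optimizing this amplified moment. The polynomial dependence on the spectral parameter $\tau$ produces the factors $(1+|t|)^B(Q_U+Q_V)^C$. The paper's value $A=18$ arises because the moment bound is proportional to $M^3$ with $M\ll\cond(\cF)^6$. Your plan has no step that could replace this input.
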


\begin{rmk}[Explicit conductor dependence]
    In the proof of \cite[Theorem 1.16]{FKM14}, the conductor dependence arises from bounds on the amplified second moments of the sums
    \begin{equation*}
        S_{V,X}(it,k)=\sum_{n}K(n)d_{it}(n)V\left(\frac{n}{X}\right),
    \end{equation*}
    where $d_{it}$ denotes the twisted divisor function
    \begin{equation*}
        d_{u}(n)=\sum_{ab=n}\left(\frac{a}{b}\right)^u.
    \end{equation*}
    A bound on these amplified second moments is given in \cite[Proposition 4.1]{FKM15_cond}. This bound is proportional to $M^3$, where $M\geq 1$ is chosen such that $K$ is $(q,M)$-good, following the notation of \cite[Definition 1.8]{FKM15_cond}. The proof of \cite[Theorem 1.14]{FKM15_cond} implies that trace functions are $(q,M)$-good for $M\ll \cond(\cF)^{6}$. Tracing this conductor dependence through the proof, we can take $A=18$ in \eqref{FKM smoothed bound}. For our scenario, we will take $\cF=\mathcal{K}\ell_k$ to be a hyper-Kloosterman sheaf, for which $\cond(\mathcal{K}\ell_k)=k+3$.
\end{rmk}
If we take $X=q^\theta>1$, the effective lengths of the sums over non-inverted variables is longer than the sums over the inverted variables. This way, we can take advantage of the power-saving bound \eqref{FKM smoothed bound}. We now prove bounds for the four terms in equation \eqref{AFE sum decomp}.
\begin{prop}\label{bounds on B1,B2,B3,B4}
    Let $q$ be a prime, $k\geq 2$ an integer, and $X=q^\theta$ with $0<\theta\leq 1/4$, we have
    \begin{align*}
        \cB_1(m_1,m_2,k)&\ll kq^{1-\theta+\epsilon},\\
        \cB_2(m_1,m_2,k)&\ll kq^{1-\theta/2+\epsilon},\\ \cB_3(m_1,m_2,k)&\ll kq^{1-\theta/2+\epsilon},\\
        \cB_4(m_1,m_2,k)&\ll k^{18}q^{7/8+\theta+\epsilon},
    \end{align*}
    where the implicit constant depends only on $\epsilon$.
\end{prop}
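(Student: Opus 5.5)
The plan is to first make all four sums amenable to bilinear estimates. I would apply a smooth dyadic partition of unity in $n_1$ and $n_2$, truncate at $n_i\le (X^{\pm1}\sqrt q)q^{\epsilon}$ using the rapid decay of $V$ in \eqref{eq: bounds on V}, and separate the weights $n^{-1/2}V(\cdot)$ from the dyadic bump functions (via Mellin inversion if a factorized form is needed). For $\cB_1$ this is all that is needed. Both variables are inverted and have length $\le q^{1/2-\theta+\epsilon}$, so the Deligne bound $|\Kl_{k-1}|\le k$ (recall $q$ is prime) gives
\begin{equation*}
\cB_1\ll \frac{\varphi(q)}{\sqrt q}\,k\Bigl(\sum_{n\le q^{1/2-\theta+\epsilon}}n^{-1/2}\Bigr)^2\ll k\,q^{1/2}\cdot q^{1/2-\theta+\epsilon}=kq^{1-\theta+\epsilon}.
\end{equation*}

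For $\cB_4$ both variables are non-inverted, with lengths $N_1,N_2\le q^{1/2+\theta+\epsilon}$. I would take $K(n)=\Kl_{k+1}(\pm\ol{m_1}m_2 n;q)$, which is an isotypic trace weight of the (multiplicatively shifted) Kloosterman sheaf with conductor $\ll k$. On each dyadic box I would apply Theorem \ref{thm: FKM smoothed sums} with $t=0$, $Q_U,Q_V\ll 1$, and a harmless $W$ (or absorb $W$ into the bumps), with $A=18$ and $\eta$ close to $1/8$. One box then contributes
\begin{equation*}
\frac{\sqrt q}{\sqrt{N_1N_2}}\,k^{18}N_1N_2\Bigl(1+\frac{q}{N_1N_2}\Bigr)^{1/2}q^{-1/8+\epsilon}.
\end{equation*}
This is $\ll k^{18}q^{7/8+\epsilon}$ when $N_1N_2\le q$, and $\ll k^{18}\sqrt{q}\,(X^2q)^{1/2}q^{-1/8+\epsilon}=k^{18}q^{7/8+\theta+\epsilon}$ at the top of the range. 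Summing over $O(\log^2q)$ boxes gives the stated bound for $\cB_4$.

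The main work is $\cB_2$ (and symmetrically $\cB_3$). Here $n_1$ is inverted with length $N_1\le q^{1/2-\theta+\epsilon}$, $n_2$ is non-inverted with length $N_2\le q^{1/2+\theta+\epsilon}$, and the kernel is $\Kl_k(c\,\ol{n_1}n_2)$. The trivial bound is only $kq^{1+\epsilon}$, so a saving of $q^{-\theta/2}$ is needed. I would use the shifting trick of \cite{KMS17}. Replace $n_2$ by $n_2+hn_1$ with $1\le h\le H$ and $HN_1\le N_2q^{-\delta}$; since $\ol{n_1}(n_2+hn_1)=\ol{n_1}n_2+h$, the kernel becomes $\Kl_k(c(\ol{n_1}n_2+h))$. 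The change in the smooth weight costs $O(HN_1/N_2)$ times the trivial bound. Averaging over $h$ and grouping by $r\equiv \ol{n_1}n_2\pmod q$ with multiplicity $\nu(r)$, I would apply H\"older in the form
\begin{equation*}
\sum_r\nu(r)\Bigl|\sum_{h\le H}\Kl_k(c(r+h))\Bigr|\le\Bigl(\sum_r\nu(r)\Bigr)^{1/2}\Bigl(\sum_r\nu(r)^2\Bigr)^{1/4}\Bigl(\sum_{r\bmod q}\Bigl|\sum_{h\le H}\Kl_k(c(r+h))\Bigr|^4\Bigr)^{1/4}.
\end{equation*}
The inputs are $\sum_r\nu(r)\le N_1N_2$ and $\sum_r\nu(r)^2\ll N_1N_2q^{\epsilon}$; the latter holds because $N_1N_2\le q^{1+\epsilon}$, so the congruence $n_1n_2'\equiv n_1'n_2$ essentially forces equality. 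For the complete fourth moment I would use $\ll k^{O(1)}(qH^2+q^{1/2}H^4)$, which follows from Deligne/Katz since distinct additive shifts of $\mathcal{K}\ell_k$ are geometrically non-isomorphic, so only diagonal $4$-tuples of shifts contribute $q$. Balancing the shifting error $H N_1/N_2$ against the resulting saving $H^{-1/2}+q^{-1/8}$ with $N_1/N_2\approx q^{-2\theta}$ gives $H\approx q^{\theta}$ and the saving $q^{-\theta/2}$ (here $\theta\le 1/4$ is used).

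I expect the main obstacle to be this $\cB_2$/$\cB_3$ step: checking that the $\nu(r)^2$ bound and the fourth-moment sheaf-theoretic input hold uniformly, with only the claimed linear dependence on $k$ rather than $k^{O(1)}$. If the fourth-moment constant is polynomial in $k$, I would accept that and note that it is dominated by $k^{18}$ in the final application.
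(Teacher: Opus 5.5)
Your proposal is correct, and its skeleton matches the paper: Deligne's bound for $\cB_1$, dyadic boxes plus Theorem \ref{thm: FKM smoothed sums} (with $A=18$, $\eta\to 1/8$) for $\cB_4$, and a Kowalski--Michel--Sawin shift of the long non-inverted variable by $h$ times the short inverted one for $\cB_2,\cB_3$, with $H=q^\theta$.

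\textbf{Where you differ: H\"older versus Cauchy--Schwarz.} The real difference is how you bound the averaged shifted sum. The paper uses Cauchy--Schwarz. This only needs the pair correlations $\sum_x \Kl_k(c(x+h_1);q)\ol{\Kl_k(c(x+h_2);q)}\ll k^2\sqrt q$ for $h_1\neq h_2$, giving $W\ll k^2Hq$ and $\ol{\cB_3}\ll kq^{1+\epsilon}H^{-1/2}$. Because $\sum_r\nu(r)^2$ is essentially diagonal (your own observation that $N_1N_2\le q^{1+\epsilon}$), this is already as strong as your H\"older step on the top box.

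Your H\"older step has costs. It requires the four-fold sums of products, i.e.\ the normal-tuple analysis for $\mathrm{SL}_k$/$\mathrm{Sp}_k$. It also creates the extra $q^{7/8}$ term. Absorbing that term is where you genuinely use $\theta\le 1/4$, whereas the paper's route only needs $H\ll\sqrt q$.

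In exchange, your per-box bound $k\sqrt q\,(N_1N_2)^{1/4}\bigl(q^{1/4}H^{-1/2}+q^{1/8}\bigr)q^{\epsilon}$ shrinks with the box. The Cauchy--Schwarz bound $kq^{1+\epsilon}H^{-1/2}$ is the same for every box. Choosing $H$ box by box, your route even seems to give $kq^{1-2\theta/3+\epsilon}+kq^{7/8+\epsilon}$, slightly better than the paper.

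\textbf{Dependence on $k$.} The linear dependence on $k$ does survive. In the non-normal case the four-fold tensor sheaf has rank $k^4$, is tame at the finite singularities, and has Swan conductor at most $k^3$ at $\infty$. So its sum is $O(k^4\sqrt q)$, and after taking the fourth root the constant is $O(k)$.

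\textbf{A step that needs repair: the small-$n_2$ boxes.} The per-box shifting error $HN_1/N_2$ is valid only for boxes with $N_2'\gg HN_1'$, where primes denote box sizes. Boxes with $N_2'\lesssim HN_1'$ must be bounded trivially. They contribute about $k\sqrt q\,N_1H^{1/2}\approx kq^{1-\theta+\epsilon}H^{1/2}$, which is exactly the paper's boundary term from the region $n_1\le hn_2$.

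This term, not the linear error $Hq^{-2\theta}$, is what forces $H=q^\theta$. Balancing $Hq^{-2\theta}$ against $H^{-1/2}$, as you describe, would suggest $H=q^{4\theta/3}$. With that choice these boxes would contribute $q^{1-\theta/3}$. With $H=q^\theta$ and this additional trivial estimate, your argument does yield $\cB_2,\cB_3\ll kq^{1-\theta/2+\epsilon}$.
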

When $\theta=1/12$, these bounds are balanced, implying Theorem \ref{thm: second weighted moment}.
The remainder of this subsection is devoted to proving Proposition \ref{bounds on B1,B2,B3,B4}. Let $X=q^\theta$ for some fixed $\theta>0$. The trivial bound on $\cB_1(m_1,m_2,k)$ gives
\begin{equation*}
    \cB_1(m_1,m_2,k)\ll q^{1-\theta+\epsilon}.
\end{equation*}

To analyze $\cB_4(m_1,m_2,k)$, we note that the main contribution arises from $n_1,n_2\leq Xq^{1/2+\epsilon/2}$, and the remaining terms can be bounded by an arbitrarily small power of $q$ using bounds the on $V(y)$ in \eqref{eq: bounds on V}, with $A$ sufficiently large depending on $\epsilon$.
We now apply the bound \eqref{FKM smoothed bound} to main part of $\cB_4(m_1,m_2,k)$ by splitting the sum into dyadic segments. Consider a partition of unity
\begin{equation*}
    \mathds{1}_{n>0}=\sum_{j\in\Z}\rho\left(\left(\frac{2}{3}\right)^j n\right),
\end{equation*}
where $\rho$ is supported on $[3/4,\,4/3]$. Then, we have the identity
\begin{equation*}
    1=\sum_{j=0}^{J}\rho\left(\left(\frac{2}{3}\right)^j n\right),
\end{equation*}
for all $n\leq Xq^{1/2+\epsilon/2}$ , where $J\geq\log(Xq^{1/2+\epsilon/2})/\log(3/2)$. Then, the main part of $\cB_4(m_1,m_2,k)$ reduces to
\begin{equation}\label{full smoothed dyadic sum}
    \sqrt{q}\underset{\substack{N_1=(3/2)^i\\ N_2=(3/2)^j}}{\sum\sum}\frac{1}{\sqrt{N_1N_2}}\sum_{n_1}\sum_{n_2}\frac{\sqrt{N_1}}{\sqrt{n_1}}\rho\left(\frac{n_1}{N_1}\right)V\left(\frac{n_1}{X\sqrt{q}}\right)\frac{\sqrt{N_2}}{\sqrt{n_2}}\rho\left(\frac{n_2}{N_2}\right)V\left(\frac{n_2}{X\sqrt{q}}\right)\Kl_{k+1}(cn_1n_2;q),
\end{equation}
where $i$ and $j$ range over $0\leq i,j\leq J$. Since $\rho(n_1/N_1)\rho(n_2/N_2)$ is supported on $\frac{n_1n_2}{N_1N_2}\in[9/16,\,16/9]$, we can choose a bump function $W$ supported on $[1/2,\,2]$ such that $W(x)=1$ on $[9/16,\,16/9]$. Then, inserting a factor of $W(\frac{n_1n_2}{N_1N_2})$ into our sum does not alter its value. Thus, our sum over $n_1$ and $n_2$ is given by
\begin{equation}\label{sum over n1,n2}
\sum_{n_1}\sum_{n_2}U_1\left(\frac{n_1}{N_1}\right)U_2\left(\frac{n_2}{N_2}\right)W\left(\frac{n_1n_2}{N_1N_2}\right)\Kl_{k+1}(cn_1n_2;q),
\end{equation}
where
\begin{equation*}
    U_i(x)=\frac{1}{\sqrt{x}}\rho(x)V\left(\frac{N_i}{X\sqrt{q}}x\right).
\end{equation*}
By the bounds on $V$ from \eqref{eq: bounds on V}, condition \eqref{eq: smoothing condition} is satisfied with $Q_{U_1}=Q_{U_2}=Q_W=1$.
Applying Theorem \ref{thm: FKM smoothed sums} with $\cF=\mathcal{K}\ell_k$, the sum \eqref{sum over n1,n2} is be bounded by
\begin{equation*}
    k^{18}(N_1N_2)\left(1+\frac{q}{N_1N_2}\right)^{1/2}q^{-\eta}.
\end{equation*}
Therefore, \eqref{full smoothed dyadic sum} is bounded by
\begin{equation*}
    k^{18}\sqrt{q}\underset{\substack{N_1=(3/2)^i\\ N_2=(3/2)^j}}{\sum\sum}(N_1N_2+q)^{1/2}q^{-\eta}\ll k^{18}q^{7/8+\theta+\epsilon},
\end{equation*}
upon taking $\eta=\frac{1}{8}-\frac{\epsilon}{4}$.

It remains to bound $\cB_2$ and $\cB_3$, which are identical to each other up to a change of variables. The effective length of summation for these sums is approximately $q$, and we will use a shifting trick to obtain a small power saving. Without loss of generality, let's analyze $\cB_3(m_1,m_2,k)$.
Let $N_1=X\sqrt{q}=q^{1/2+\theta}$ and $N_2=\sqrt{q}/X=q^{1/2-\theta}$, so
\begin{equation*}
    \cB_3(m_1,m_2,k)=\sqrt{q}\sum_{n_1}\sum_{n_2}\frac{1}{\sqrt{n_1n_2}}V\left(\frac{n_1}{N_1}\right)V\left(\frac{n_2}{N_2}\right)\Kl_k(cn_1\ol{n_2};q),
\end{equation*}
where $c\equiv\pm\ol{m_1}m_2\pmod{q}$.
Now, consider the shifted sum
\begin{equation*}
    \cB_{3,h}(m_1,m_2,k)=\sqrt{q}\sum_{n_1}\sum_{n_2}\frac{1}{\sqrt{n_1n_2}}V\left(\frac{n_1}{N_1}\right)V\left(\frac{n_2}{N_2}\right)\Kl_k(c(n_1+hn_2)\ol{n_2};q),
\end{equation*}
where $0\leq h\leq H\leq N_1/N_2=q^{2\theta}$.
Denoting the difference $\cB_{3,h}(m_1,m_2,k)-\cB_3(m_1,m_2,k)$ by $\Delta$, we see that
\begin{align*}
    \Delta=&-\sqrt{q}\sum_{n_2}\frac{1}{\sqrt{n_2}}V\left(\frac{n_2}{N_2}\right)\sum_{n_1\leq hn_2}\frac{1}{\sqrt{n_1}}V\left(\frac{n_1}{N_1}\right)\Kl_k(cn_1\ol{n_2};q)\\
    &+\sqrt{q}\sum_{n_2}\frac{1}{\sqrt{n_2}}V\left(\frac{n_2}{N_2}\right)\sum_{n_1>hn_2}\left(\frac{1}{\sqrt{n_1-hn_2}}V\left(\frac{n_1-hn_2}{N_1}\right)-\frac{1}{\sqrt{n_1}}V\left(\frac{n_1}{N_1}\right)\right)\Kl_k(cn_1\ol{n_2};q).
\end{align*}
We can bound the first term by $kq^{1/2}H^{1/2}N_2^{1+\epsilon}$ using the weak bound $V(y)\ll 1$. For the second term, call it $\Delta_2$, we must be more careful, using bounds for $V(y)$ and $V'(y)$. Using the bound $\Kl_k(x)\ll k$, we get that
\begin{equation*}
    \Delta_2\ll k\sqrt{q}\sum_{n_2}\frac{1}{\sqrt{n_2}}\left|V\left(\frac{n_2}{N_2}\right)\right|\sum_{n_1>hn_2}\left|\frac{1}{\sqrt{n_1-hn_2}}V\left(\frac{n_1-hn_2}{N_1}\right)-\frac{1}{\sqrt{n_1}}V\left(\frac{n_1}{N_1}\right)\right|.
\end{equation*}
Then, using the bounds on $V(y)$ and $V'(y)$ from \eqref{eq: bounds on V}, we have
\begin{equation*}
    \Delta_2\ll kq^{1/2}H^{1/2}\sum_{n_2}\left|V\left(\frac{n_2}{N_2}\right)\right| \ll kq^{1/2}H^{1/2}N_2^{1+\epsilon}.
\end{equation*}
Therefore,
\begin{equation}\label{eq: averaged vs actual}
    \cB_3(m_1,m_2,k)=\ol{\cB_3}(m_1,m_2,k;H)+O(kq^{1-\theta+\epsilon}H^{1/2}),
\end{equation}
where
\begin{equation*}
    \ol{\cB_3}(m_1,m_2,k;H)=\frac{1}{H}\sum_{h\leq H}\cB_{3,h}(m_1,m_2,k).
\end{equation*}

We now want to analyze the sum of shifted sums. Applying Cauchy-Schwarz, we obtain
\begin{align*}
    \ol{\cB_3}(m_1,m_2,k;H)&=\frac{\sqrt{q}}{H}\sum_{h\leq H}\sum_{n_1}\sum_{n_2}\frac{1}{\sqrt{n_1n_2}}V\left(\frac{n_1}{N_1}\right)V\left(\frac{n_2}{N_2}\right)\Kl_k\left(c(n_1\ol{n_2}+h);q\right),\\
    &\leq\frac{\sqrt{q}}{H}\sum_{x\shortmod{q}}\nu(x)\left\vert\sum_{h\leq H}\Kl_k(c(x+h);q)\right\vert\\
    &\leq\frac{\sqrt{q}}{H}\left(\sum_{x\shortmod{q}} \nu^2(x)\right)^{1/2}\left(\sum_{x\shortmod{q}}\left\vert\sum_{h\leq H}\Kl_k(c(x+h);q)\right\vert^2\right)^{1/2},
\end{align*}
where
\begin{equation*}
    \nu(x)=\sum_{n_1\ol{n_2}\equiv x\shortmod{q}}\frac{1}{\sqrt{n_1n_2}}V\left(\frac{n_1}{N_1}\right)V\left(\frac{n_2}{N_2}\right).
\end{equation*}
To simplify notation, we write
\begin{equation*}
    \ol{\cB_3}(m_1,m_2,k;H)\leq\frac{\sqrt{q}}{H}V_2^{1/2}W^{1/2},
\end{equation*}
where
\begin{align*}
    V_2=\sum_{x\shortmod{q}}\nu^2(x),\quad W=\sum_{x\shortmod{q}}\left\vert\sum_{h\leq H}\Kl_k(c(x+h);q)\right\vert^2.
\end{align*}
We now produce bounds on $V_2$ and $W$.
\begin{lem}\label{lem: bounds on V2, W}
    Assume that $H\ll \sqrt{q}$. Then, using notation as above,
    \begin{align*}
        V_2&\ll q^\epsilon,\\
        W&\ll k^2Hq.
    \end{align*}
\end{lem}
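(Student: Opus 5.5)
For $V_2$ I will expand the square and read it as a count of congruence solutions. Since
\begin{equation*}
V_2=\sum_{x\shortmod q}\nu(x)^2=\sum_{\substack{n_1,n_2,n_3,n_4\\ n_1n_4\equiv n_3n_2\shortmod q}}\frac{1}{\sqrt{n_1n_2n_3n_4}}V\Big(\frac{n_1}{N_1}\Big)V\Big(\frac{n_2}{N_2}\Big)V\Big(\frac{n_3}{N_1}\Big)V\Big(\frac{n_4}{N_2}\Big),
\end{equation*}
I first truncate to $n_1,n_3\le N_1q^{\epsilon}$ and $n_2,n_4\le N_2q^{\epsilon}$, using the rapid decay in \eqref{eq: bounds on V}. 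I then split all four variables dyadically, with $n_1,n_3\sim A$ and $n_2,n_4\sim B$, so that $A\le N_1q^\epsilon$ and $B\le N_2q^\epsilon$. On such a block the weight is $\ll (AB)^{-1}$. The products $n_1n_4$ and $n_2n_3$ are both $\ll AB\le N_1N_2q^{2\epsilon}=q^{1+2\epsilon}$. For a fixed value of $n_1n_4$, the congruence $n_2n_3\equiv n_1n_4$ allows only $O(q^{2\epsilon})$ admissible values of $n_2n_3$ (the residue plus multiples of $q$ up to $q^{1+2\epsilon}$). The divisor bound then gives $O(q^{\epsilon})$ factorizations $(n_2,n_3)$ for each such value. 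Hence the number of quadruples in the block is $\ll (AB)q^{3\epsilon}$, and the block contributes $\ll q^{3\epsilon}$. Summing over the $O((\log q)^4)$ blocks gives $V_2\ll q^{\epsilon}$ after renaming $\epsilon$. The diagonal $n_1n_4=n_2n_3$ is already included in this count, so it needs no separate treatment.

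For $W$ I will expand the square and complete the sum over $x$:
\begin{equation*}
W=\sum_{h_1,h_2\le H}\ \sum_{x\shortmod q}\Kl_k(c(x+h_1);q)\ol{\Kl_k(c(x+h_2);q)}.
\end{equation*}
The diagonal $h_1=h_2$ contributes $\le Hqk^2$ by Deligne's bound $|\Kl_k|\le k$. For $h_1\ne h_2$, the inner sum is a correlation sum of the hyper-Kloosterman trace function with its additive shift by $c(h_1-h_2)\not\equiv0$. This requires $H<q$, which holds since $H\ll\sqrt q$. I will invoke the known result (Fouvry--Kowalski--Michel) that $\mathcal{K}\ell_k$ is not geometrically isomorphic to any nontrivial additive translate $[+a]^*\mathcal{K}\ell_k$. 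The reason is that $\mathcal{K}\ell_k$ is lisse on $\mathbb{G}_m$ and ramified only at $0$ and $\infty$, while a translate is ramified at $-a$. The Riemann hypothesis (Deligne) with conductor control then bounds each such correlation by $\ll k^{2}\sqrt q$ up to an absolute constant depending on conductors, which are polynomial in $k$. The off-diagonal terms therefore contribute $\ll k^{O(1)}H^2\sqrt q$. Since $H\ll\sqrt q$, this is $\ll k^{O(1)}Hq$.

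The main obstacle is the $k$-dependence of the off-diagonal bound. A crude conductor argument gives a correlation bound like $(k+3)^{O(1)}\sqrt q$ rather than $k^2\sqrt q$. I would first try the sharper route of computing the correlation directly: opening both $\Kl_k$ and summing over $x$ reduces it to a complete sum whose cohomology has dimension $\ll k^2$ when the sheaves are non-isomorphic. This would give exactly $k^2\sqrt q$. If that fails, I will accept a polynomial loss $k^{O(1)}$. That loss only affects the $k$-exponent in Theorem~\ref{thm: second weighted moment} through $\cB_3$, and it is harmless there as long as it stays polynomial.
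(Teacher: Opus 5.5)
Your proposal is correct and follows the same skeleton as the paper's proof. You expand the square. For $V_2$ you lift the congruence to an integer equation with only $q^{O(\epsilon)}$ admissible multiples of $q$ and apply the divisor bound. For $W$ you separate $h_1=h_2$ (Deligne) from $h_1\neq h_2$ (a square-root correlation bound). The differences are in execution, and one of them needs a one-line repair.

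\emph{The bound for $V_2$.} The paper does not decompose dyadically. After the symmetry reduction $n_1n_2'\ge n_1'n_2$ it writes $n_1n_2'=n_1'n_2+jq$, bounds the weight of the larger pair by that of the smaller, and sums over $j$ using the decay of $V$. That symmetry step handles configurations where $n_1,n_3$ (or $n_2,n_4$) lie in different dyadic ranges, and your write-up omits these. For a block $n_1\sim A_1$, $n_3\sim A_3$, $n_2\sim B_2$, $n_4\sim B_4$, fixing $n_1n_4$ only gives $q^{3\epsilon}\sqrt{A_1B_4/(A_3B_2)}$. One way to repair this is to majorize $|\nu|\le\sum_{A,B}\nu_{A,B}$ by dyadic pieces (with $|V|$ in place of $V$). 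Then $(\sum_{j\le L}a_j)^2\le L\sum_j a_j^2$ gives $V_2\ll(\log q)^2\sum_{A,B}\sum_x\nu_{A,B}(x)^2$, which is a sum of exactly your balanced blocks. Alternatively, always fix the pair with the smaller product, as the paper does.

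\emph{The bound for $W$.} Your ramification argument is a more elementary, self-contained substitute for the paper's appeal to Katz's monodromy groups and \cite[Corollary 1.6]{FKM15}. For $k\ge2$ and $a\neq0$, $[+a]^*\mathcal{K}\ell_k$ has nontrivial unipotent monodromy at $-a$, where $\mathcal{K}\ell_k$ is lisse; combined with geometric irreducibility this gives non-isomorphism.

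It also gives the stated $k^2$ directly, so you do not need the $k^{O(1)}$ fallback, which would not prove the lemma as written. Set $\mathcal{G}=[+a]^*\mathcal{K}\ell_k\otimes D(\mathcal{K}\ell_k)$. Then:
\begin{itemize}
\item $\mathcal{G}$ has rank $k^2$, is lisse on $\mathbb{A}^1\setminus\{0,-a\}$, and is tame at $0$ and $-a$.
\item All breaks of $\mathcal{G}$ at $\infty$ are at most $1/k$, since translation fixes $\infty$ and preserves breaks. Hence $\mathrm{Swan}_\infty(\mathcal{G})\le k$.
\item $H^2_c=0$ by non-isomorphism, and Euler--Poincar\'e gives $h^1_c=k^2+\mathrm{Swan}_\infty(\mathcal{G})\le k^2+k$.
\end{itemize}
So each off-diagonal correlation is at most $(k^2+k)\sqrt q+O(k)$, where the $O(k)$ covers the two points $x=0,-a$. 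This is a cleaner derivation of the $k^2$ than the paper's ``tracking the conductor dependence.''
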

\begin{proof}
    Let's first bound $V_2$. Note that
\begin{equation*}
    V_2=\sum_{n_1n_2'\equiv n_1'n_2\shortmod{q}}\frac{1}{\sqrt{n_1n_1'n_2n_2'}}V\left(\frac{n_1}{N_1}\right)V\left(\frac{n_1'}{N_1}\right)V\left(\frac{n_2}{N_2}\right)V\left(\frac{n_2'}{N_2}\right).
\end{equation*}
By symmetry, we can bound this by the same sum where $n_1n_2'\geq n_1'n_2$.
Let's first sum over $n_1,n_2'$:
\begin{equation*}
    V_2\ll\sum_{n_1',n_2}\frac{1}{\sqrt{n_1'n_2}}\left|V\left(\frac{n_1'}{N_1}\right)V\left(\frac{n_2}{N_2}\right)\right|\sum_{k\geq 0}\sum_{n_1n_2'=n_1'n_2+kq}\frac{1}{\sqrt{n_1n_2'}}\left|V\left(\frac{n_1}{N_1}\right)V\left(\frac{n_2'}{N_2}\right)\right|.
\end{equation*}
The sum along the diagonal ($k=0$) is bounded by
\begin{equation*}
    \sum_{n_1',n_2}\frac{\tau(n_1'n_2)}{n_1'n_2}\left|V\left(\frac{n_1'}{N_1}\right)V\left(\frac{n_2}{N_2}\right)\right|\ll q^\epsilon.
\end{equation*}
The sum along the off-diagonal ($k>0$) is bounded by
\begin{equation*}
    \sum_{n_1',n_2}\frac{1}{n_1'n_2}\left|V\left(\frac{n_1'}{N_1}\right)V\left(\frac{n_2}{N_2}\right)\right|\sum_{k>0}{\tau(n_1'n_2+kq)}k^{-2}\ll q^\epsilon.
\end{equation*}
For $W$, we have
\begin{equation*}
    W=\sum_{h_1,h_2\leq H}\sum_{x\shortmod{q}}\Kl_k(c(x+h_1);q)\ol{\Kl_k(c(x+h_2);q)}.\end{equation*}
Sums of products of hyper-Kloosterman sums of this type are bounded by Fouvry, Kowalski and Michel in \cite{FKM15}. Due to Katz \cite{K88}, when $k$ is even, the normalized hyper-Kloosterman sheaf $\mathcal{K}\ell_k$ is self-dual, and when $k$ is odd, its geometric monodromy group is equal to $\mathrm{SL}(r)$ as long as $q>2$. Therefore, the assumptions for \cite[Corollary 1.6]{FKM15} apply whenever $h_1\neq h_2$. Thus, tracking the conductor dependence in \cite{FKM15}, we can bound the inner sum over $x\pmod{q}$ by $k^2\sqrt{q}$. Therefore, we have the bound $W\ll k^2H^2\sqrt{q}+k^2Hq\ll k^2Hq$, as long as $H\ll\sqrt{q}$.
\end{proof}
Therefore, we obtain that
\begin{equation*}
    \ol{\cB_3}(m_1,m_2,k;H)\ll \frac{kq^{1+\epsilon}}{H^{1/2}},
\end{equation*}
and thus by \eqref{eq: averaged vs actual},
\begin{equation*}
    \cB_3(m_1,m_2,k)\ll \frac{kq^{1+\epsilon}}{H^{1/2}}+kq^{1-\theta+\epsilon}H^{1/2}.
\end{equation*}
When $H=q^\theta$, these terms are balanced, giving
\begin{equation}\label{B_3 bound}
    \cB_3(m_1,m_2,k)\ll kq^{1-\theta/2+\epsilon}.
\end{equation}

\subsection{Bounds on for the $k=1$ case}
It remains to consider the case $k=\pm 1$. Here we consider the case $k=1$, and an analogous approach will produce the same results for $k=-1$. As in \eqref{AFE sum decomp}, we have
\begin{equation}\label{eq: B1+...+B4 k=1}
    \cB(m_1,m_2,1)=\cB_1(m_1,m_2,1)+\cB_2(m_1,m_2,1)+\cB_3(m_1,m_2,1)+\cB_4(m_1,m_2,1)+O\left((X+X^{-1})q^{1/2+\epsilon}\right),
\end{equation}
but here the $\cB_1(m_1,m_2,1)$ term degenerates to
\begin{equation*}
    \cB_1(m_1,m_2,1)=\frac{\varphi(q)}{2}\underset{m_1n_1n_2\equiv\pm m_2}{\sum_{n_1}\sum_{n_2}}\frac{1}{\sqrt{n_1n_2}}V\left(\frac{n_1X}{\sqrt{q}}\right)V\left(\frac{n_2X}{\sqrt{q}}\right).
\end{equation*}
In this case, $\cB_1(m_1,m_2,1)$ cannot be bounded well enough individually. After summation over $m_1$ and $m_2$ introduced by a mollifier, we can bound these terms well on average, which we will do in 
Section \ref{sec: mol2}. If we set $X=q^{-\theta}$ with $\theta>0$, $\cB_4(m_1,m_2,1)$, is trivially bounded by $q^{1-\theta+\epsilon}$. Finally, we can bound the sums $\cB_2(m_1,m_2,1)$ and $\cB_3(m_1,m_2,1)$ using completion techniques.

Without loss of generality, we bound $\cB_3(m_1,m_2,1)$, which degenerates to
\begin{equation}\label{eq: B3}
    \cB_3(m_1,m_2,1)=\frac{\varphi(q)}{2\sqrt{q}}\sum_{n_1}\sum_{n_2}\frac{1}{\sqrt{n_1n_2}}V\left(\frac{n_1}{X\sqrt{q}}\right)V\left(\frac{n_2X}{\sqrt{q}}\right)\left[e\left(\frac{\ol{m_1}m_2n_1\ol{n_2}}{q}\right)+e\left(-\frac{\ol{m_1}m_2n_1\ol{n_2}}{q}\right)\right].
\end{equation}
We will exploit cancellation over the inverted variable, using the heuristic that the modular inverse behaves randomly mod $q$. We will first examine a simpler sum without smooth weights.
Let
\begin{equation*}
    S_m(T)=\underset{n\leq T}{\primsumempty}e\left(\frac{m\ol{n}}{q}\right).
\end{equation*}
Completing this sum using additive characters, we get
\begin{equation}\label{eq: completed sum}
    S_m(T)=\frac{1}{q}\sum_{a\shortmod{q}}\lambda\left(\frac{a}{q}\right)S(-a,m;q),
\end{equation}
where
\begin{equation*}
    \lambda\left(\frac{a}{q}\right)=\sum_{n\leq T}e\left(\frac{an}{q}\right),
\end{equation*}
and $S(-a,m;q)$ is the (non-normalized) Kloosterman sum
\begin{equation*}
    S(a,b;q)=\underset{x\shortmod{q}}{\primsumempty}e\left(\frac{ax+b\ol{x}}{q}\right).
\end{equation*}
By Weil's bound for Kloosterman sums,
\begin{equation*}
    \left|S(-a,m;q)\right|\leq 2^{\omega(q)}q^{1/2}.
\end{equation*}
Additionally, we have that
\begin{equation*}
    \left|\lambda\left(\frac{a}{q}\right)\right|\leq \frac{1}{2\|a/q\|}=\frac{q}{2|a|},
\end{equation*}
as long as we take $0<|a|\leq q/2$.
Therefore, treating the $a=0$ and $a\neq0$ terms separately in \eqref{eq: completed sum}, we obtain the bound
\begin{equation*}
    |S_m(T)|\leq \frac{T}{q}+2^{\omega(q)}\sqrt{q}\sum_{1\leq a\leq q/2}\frac{1}{a}\ll \frac{T}{q}+q^{1/2+\epsilon}
\end{equation*}
for $q$ prime.
Also noting the trivial bound for $S_m(T)$, we have
\begin{equation*}
    S_m(T)\ll\min\left(T,\frac{T}{q}+q^{1/2+\epsilon}\right).
\end{equation*}
Now to analyze the sum with weights coming from $\cB_3(m_1,m_2,1)$, we may use partial summation. The inner sums over $n_2$ in  \eqref{eq: B3} reduce to
\begin{align}\label{partial summation integral}
\sum_{n_2}\frac{1}{\sqrt{n_2}}V\left(\frac{n_2X}{\sqrt{q}}\right)e\left(\frac{m\ol{n_2}}{q}\right)=-\int_1^\infty \left(-\frac{1}{2t^{3/2}}V\left(\frac{tX}{\sqrt{q}}\right)+\frac{1}{t^{1/2}}\frac{X}{\sqrt{q}}V'\left(\frac{tX}{\sqrt{q}}\right)\right)S_m(t)\,dt,
\end{align}
where $m=\pm\ol{m_1}m_2n_1$.
Splitting \eqref{partial summation integral} into $I_1=\int_1^{\sqrt{q}}$ and $I_2=\int_{\sqrt{q}}^\infty$ and using that $V(y),\,yV'(y)\ll(1+y)^{-1}$ from \eqref{eq: bounds on V},
\begin{align*}
    I_1&\ll\int_1^{\sqrt{q}}\frac{1}{t^{3/2}}\left(1+\frac{tX}{\sqrt{q}}\right)^{-1}t\,dt\ll q^{1/4},\\
    I_2&\ll \int_{\sqrt{q}}^\infty \frac{1}{t^{3/2}}\left(1+\frac{tX}{\sqrt{q}}\right)^{-1}\left(\frac{t}{q}+q^{1/2+\epsilon}\right)\ll q^{1/4+\epsilon}.
\end{align*}
Therefore, using the trivial bound for the sum over $n_1$,
\begin{equation*}
    \cB_3(m_1,m_2,1)\ll q^{3/4+\epsilon}(X\sqrt{q})^{1/2}= q^{1-\theta/2+\epsilon},
\end{equation*}
and an identical bound holds for $\cB_2(m_1,m_2,k)$.
Applying these bounds to \eqref{eq: B1+...+B4 k=1},
\begin{equation}\label{B for k=1}
    \cB(m_1,m_2,1)=\cB_1(m_1,m_2,1)+O(q^{1-\theta/2+\epsilon}),
\end{equation}
as long as $0<\theta<1/3$.

\section{Mollified moments}
For the following sections, we use the mollifer introduced in \cite{IS99}, which takes the form
\begin{equation}\label{eq: mollifier equation}
M(\chi)=\sum_{m\leq M}\frac{x_m\chi(m)}{m^{1/2}},
\end{equation}
where
\begin{equation}\label{mollifier coefficients}
x_m=\frac{\mu(m)m}{\varphi(m)}\frac{1}{G}\sum_{\substack{\ell\leq M/m\\(\ell,mq)=1}}\frac{\mu^2(\ell)}{\varphi(\ell)},
\end{equation}
supported on $(m,q)=1$, and $G$ is given by the sum
\begin{equation}\label{eq: defn of G}
G=\sum_{\substack{k\leq M\\ (k,q)=1}}\frac{\mu^2(k)}{\varphi(k)}.
\end{equation}
\subsection{Mollified first moments}\label{sec: mol1}
Using this mollifier, we prove a bound on the mollified first moment twisted by powers of the root number.
\begin{prop}\label{prop: first mollified moment}
Let $q$ be a positive integer, and the mollifier $M(\chi)$ as defined above with length $M=q^\alpha$ and $0<\alpha<1/2$. Then we have
    \begin{align*}
    \evensum \epsilon(\chi)^kM(\chi) L(1/2,\chi)=
    \begin{cases}
        \displaystyle\varphi^+(q)+O\left(\tau (q) q^{1/2+\alpha}\right),\quad &k=0,\\[5pt]
        \displaystyle O\left(\frac{q}{\alpha\log q}\right),\quad &k=-1,\\[10pt]
        \displaystyle O\left(|k|^{\omega(q)}\tau(q)q^{3/4+\alpha/2}\right),\quad &k\neq0,-1.
    \end{cases}
\end{align*}
\end{prop}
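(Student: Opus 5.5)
Expanding the mollifier, the twisted mollified first moment becomes
\begin{equation*}
    \evensum \epsilon(\chi)^kM(\chi)L(1/2,\chi)=\sum_{m\leq M}\frac{x_m}{\sqrt{m}}\,\cA(m,k),
\end{equation*}
so the plan is to insert Theorem \ref{thm: first weighted moment} term by term. We only need the elementary bound $|x_m|\leq 1$ for the mollifier coefficients. This follows because $\frac{m}{\varphi(m)}\sum_{\ell\leq M/m,(\ell,mq)=1}\mu^2(\ell)/\varphi(\ell)\leq \sum_{k\leq M,(k,q)=1}\mu^2(k)/\varphi(k)=G$; here one writes $m/\varphi(m)=\sum_{d\mid m}\mu^2(d)/\varphi(d)$ and notes that the products $d\ell$ are distinct squarefree integers at most $M$.

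For $k=0$, the main term $\delta(m)\varphi^+(q)$ survives only at $m=1$, where $x_1=1$ (the sum defining $x_1$ is exactly $G$), giving $\varphi^+(q)$. The error term contributes
\begin{equation*}
    \sum_{m\leq M}\frac{|x_m|}{\sqrt m}\,\tau(q)\sqrt{mq}\ll \tau(q)q^{1/2}M=\tau(q)q^{1/2+\alpha}.
\end{equation*}
For $k\neq 0,-1$, we sum the bound $|k|^{\omega(q)}\tau(q)q^{3/4}$ against $\sum_{m\leq M}m^{-1/2}\ll M^{1/2}$. This gives $|k|^{\omega(q)}\tau(q)q^{3/4+\alpha/2}$, as stated.

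The case $k=-1$ is the real content, and the main obstacle. Theorem \ref{thm: first weighted moment} gives
\begin{equation*}
    \varphi^+(q)\sum_{m\leq M}\frac{x_m}{m}+O\Big(\tau(q)q^{1/2}\sum_{m\leq M}m^{-1/2}\Big).
\end{equation*}
The error term is $O(\tau(q)q^{1/2+\alpha/2})$, which is $o(q/\log q)$ because $\alpha<1/2$. So we must show $\sum_{m\leq M}x_m/m\ll 1/(\alpha\log q)$, which is where the Möbius cancellation built into the mollifier is used. Inserting the definition of $x_m$ and swapping the order of summation, we set $n=m\ell$, which is squarefree and coprime to $q$, and use multiplicativity $\varphi(m)\varphi(\ell)=\varphi(n)$. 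This gives
\begin{equation*}
    \sum_{m}\frac{x_m}{m}=\frac{1}{G}\sum_{\substack{n\leq M\\(n,q)=1}}\frac{\mu^2(n)}{\varphi(n)}\sum_{m\mid n}\mu(m)=\frac{1}{G},
\end{equation*}
since the inner sum vanishes unless $n=1$.

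The remaining step is the lower bound $G\gg\log M=\alpha\log q$. This is standard: $\sum_{k\leq M}\mu^2(k)/\varphi(k)\geq\sum_{k\leq M}1/k\geq \log M$, and the coprimality condition $(k,q)=1$ costs at most a factor $\varphi(q)/q$. This factor is $\gg1$ for prime $q$; in general one uses the standard bound $G\geq \frac{\varphi(q)}{q}\log M$. Hence the $k=-1$ moment is $O(\varphi^+(q)/(\alpha\log q))=O(q/(\alpha\log q))$. I expect this identity $\sum x_m/m=1/G$ to be the only delicate point; the other cases are routine summation of Theorem \ref{thm: first weighted moment}.
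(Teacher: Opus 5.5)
Your proposal is correct and follows the paper's proof essentially step for step. You sum Theorem \ref{thm: first weighted moment} against $x_m/\sqrt{m}$ using $|x_m|\le 1$; for $k=-1$ you reduce to $\sum_{m\le M}x_m/m=1/G$ via $\sum_{m\mid n}\mu(m)=\delta(n)$ (the paper phrases this as the unitary convolution identity $f\star_1 g=\delta$) and then use $G\ge\frac{\varphi(q)}{q}\log M$, and your short justifications of $|x_m|\le1$ and of this lower bound for $G$ supply details the paper only asserts.
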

The choice of mollifier coefficients \eqref{mollifier coefficients} satisfies the conditions that $x_1=1$ and $|x_m|\leq 1$ for all $m$. Additionally, evaluating the sum \eqref{eq: defn of G} yields
\begin{equation*}
    G=\frac{\varphi(q)}{q}
\left(\log M+c+\sum_{p\mid q}\frac{\log p}{p}\right)+O\left(\frac{\theta(q)}{\sqrt{M}}\right),
\end{equation*}
where $c$ is an absolute constant, and $\theta(q)=2^{\omega(q)}$. The details of this evaluation are provided in Appendix \ref{section: sum}. Since $M=q^\alpha$, we have the lower bound $G\gg \frac{\varphi(q)}{q}\log M$.

The mollifed first moment twisted by $\epsilon(\chi)^k$ is given by
\begin{equation*}
    \cC(k):=\evensum \epsilon(\chi)^kM(\chi) L(1/2,\chi)=\sum_{m\leq M}\frac{x_m}{\sqrt{m}}\cA(m,k).
\end{equation*}
By the bounds on $\cA(m,k)$ from Theorem \ref{thm: first weighted moment} and the bound $|x_m|\leq 1$, we obtain the results in Proposition \ref{prop: first mollified moment} for $k\neq -1$. The sum $\cC(-1)$ contributes a secondary main term of
\begin{align*}
    \varphi^+(q)\sum_{m\leq M}\frac{x_m}{m}=\frac{\varphi^+(q)}{G}\underset{\substack{m\ell\leq M\\ m,l,q\textrm{ pairwise coprime}}}{\sum\sum}\frac{\mu(m)\mu^2(\ell)}{\varphi(m)\varphi(\ell)}.
\end{align*}
This double sum has the structure
\begin{equation*}
    \sum_{n\leq M}(f\star_1 g)(n),
\end{equation*}
with
\begin{equation*}
    f(n)=\frac{\mu(n)}{\varphi(n)}\mathds{1}_{(n,q)=1},\quad g(n)=\frac{\mu^2(n)}{\varphi(n)}\mathds{1}_{(n,q)=1},
\end{equation*}
where $f\star_1 g$ denotes the unitary convolution
\begin{equation*}
    (f\star_1 g)(n)=\sum_{\substack{lm=n\\(l,m)=1}}f(l)g(m).
\end{equation*}
The unitary convolution preserves multiplicativity, and checking on prime powers gives that $(f\star_1 g)(n)=\delta(n)$. Therefore, the main contribution from $\cC(-1)$ is $\frac{\varphi^+(q)}{G}=O(q/\log M)$. The error term of $\cA(m,-1)$ is easily bounded by this main contribution, proving Proposition \ref{prop: first mollified moment}.

\subsection{Mollified second moment}\label{sec: mol2}
In this section, we evaluate the mollified second moments, twisted by powers of the root number.
\begin{prop}\label{prop: second mollified moment}
Let $q$ be a prime and $M(\chi)$ the mollifier as defined in \eqref{eq: mollifier equation}--\eqref{eq: defn of G} with length $M=q^\alpha$ and $0<\alpha<1/24$. Then we have
\begin{align*}
    \evensum \epsilon(\chi)^k|M(\chi)L(1/2,\chi)|^2=
    \begin{cases}
        \displaystyle \left(1+\frac{1}{\alpha}\right)\varphi^+(q)+O\left(\frac{\varphi^+(q)\log\log q}{\alpha\log q}\right),\quad &k=0,\\[10pt]
        \displaystyle O\left(\frac{\varphi^+(q)}{\alpha\log q}\right),\quad&k=\pm1,\\[10pt]
        \displaystyle O_{\epsilon}\left(|k|^{18}q^{23/24+\alpha+\epsilon}\right),\quad &|k|\geq 2.
    \end{cases}
\end{align*}
\end{prop}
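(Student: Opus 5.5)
Expanding the square of the mollifier gives
\begin{equation*}
    \cD(k):=\evensum \epsilon(\chi)^k|M(\chi)L(1/2,\chi)|^2=\sum_{m_1,m_2\leq M}\frac{x_{m_1}x_{m_2}}{\sqrt{m_1m_2}}\cB(m_1,m_2,k),
\end{equation*}
and the plan is to treat the three ranges of $k$ separately, in increasing order of difficulty.

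\emph{Case $|k|\geq 2$.} This is immediate from Theorem \ref{thm: second weighted moment}. Using $|x_m|\leq 1$ and $\sum_{m\leq M}m^{-1/2}\ll M^{1/2}$, we get
\begin{equation*}
    \cD(k)\ll |k|^{18}q^{23/24+\epsilon}M=|k|^{18}q^{23/24+\alpha+\epsilon}.
\end{equation*}

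\emph{Case $k=0$.} This is exactly the Iwaniec--Sarnak computation \cite{IS99}. I would insert their main term
\begin{equation*}
    \frac{\varphi^+(q)\varphi(q)}{q\sqrt{m_1m_2}}\log\frac{L^2}{m_1m_2}
\end{equation*}
into $\cD(0)$. This produces the quadratic form
\begin{equation*}
    \sum_{m_1,m_2}\frac{x_{m_1}x_{m_2}}{m_1m_2}\bigl(2\log L-\log m_1-\log m_2\bigr).
\end{equation*}
With the coefficients \eqref{mollifier coefficients} and the evaluation of $G$ from the appendix, this form evaluates to $(1+1/\alpha)+O(\log\log q/(\alpha\log q))$; the $\log\log q$ loss comes from the arithmetic factors and $\eta(q)$. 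For the error term $E$, summed against the mollifier with $M=q^\alpha$ and $\alpha<1/2$, I would quote their averaged bound, which is $o(\varphi^+(q))$.

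\emph{Case $k=\pm1$.} This is the main obstacle. By \eqref{B for k=1}, summing the error term $O(q^{1-\theta/2+\epsilon})$ against the mollifier costs a factor $M$, giving $q^{1-\theta/2+\alpha+\epsilon}$. This is admissible if we choose, say, $\theta=1/4$, since $\alpha<1/24$. It remains to handle the degenerate term
\begin{equation*}
    \sum_{m_1,m_2}\frac{x_{m_1}x_{m_2}}{\sqrt{m_1m_2}}\cB_1(m_1,m_2,1),\qquad
    \cB_1(m_1,m_2,1)=\frac{\varphi(q)}{2}\underset{m_1n_1n_2\equiv\pm m_2}{\sum_{n_1}\sum_{n_2}}\frac{1}{\sqrt{n_1n_2}}V\Bigl(\frac{n_1q^{-\theta}}{\sqrt q}\Bigr)V\Bigl(\frac{n_2q^{-\theta}}{\sqrt q}\Bigr).
\end{equation*}
Here $n_1,n_2\lesssim q^{1/2+\theta}$, so $m_1n_1n_2$ can exceed $q$.

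I would split the congruence into the diagonal $m_1n_1n_2=m_2$ and the off-diagonal solutions $m_1n_1n_2=m_2+\ell q$ with $\ell\neq 0$ (the sign $-$ forces $\ell\geq1$).

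The off-diagonal terms are handled with the divisor bound. The condition $m_1n_1n_2=m_2+\ell q$ together with $n_1n_2\ll q^{1+2\theta+\epsilon}$ gives $\ell\ll Mq^{2\theta+\epsilon}$. For each $\ell$, $m_1$ and $m_2$ there are $\tau$-many factorizations, each weighted by $(n_1n_2)^{-1/2}\approx (\ell q/m_1)^{-1/2}$. Summing, this contributes roughly
\begin{equation*}
    \varphi(q)\,q^{-1/2}M^{3/2}(Mq^{2\theta})^{1/2}q^{\epsilon},
\end{equation*}
which is $o(q/\log q)$ provided $2\alpha+\theta<1/2$. So I would reselect $\theta$ small, compatible with the constraint above.

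The diagonal forces $m_1\mid m_2$. Writing $m_2=m_1r$ with $r=n_1n_2$, its contribution is
\begin{equation*}
    \frac{\varphi(q)}{2}\sum_{m_1}\frac{x_{m_1}}{m_1}\sum_{r}\frac{x_{m_1r}\,\tau(r)}{r}\approx\varphi(q)\sum_{m_2}\frac{x_{m_2}}{m_2}\sum_{m_1\mid m_2}x_{m_1}\tau(m_2/m_1).
\end{equation*}
Here the cutoffs $V$ are effectively $1$ because $r\leq M\ll q^{1/2}$. This is a Möbius-type cancellation analogous to the $k=-1$ first moment. Using $x_m\approx \mu(m)\frac{m}{\varphi(m)}\frac{\log(M/m)}{\log M}$, I would evaluate the double sum by Mellin/contour integration, as in \cite{IS99}. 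Each mollifier factor contributes a factor $1/\log M$ and $\tau$ contributes $\zeta^2$, so I expect the total to be $O(1/\log M)$. This gives $O(\varphi^+(q)/(\alpha\log q))$. Verifying this cancellation precisely, including the smoothing $V$ and the coprimality conditions, is the step I expect to be hardest.
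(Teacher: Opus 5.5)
Your treatment of $|k|\ge 2$ and $k=0$ matches the paper. For $k=\pm1$ your decomposition also matches: \eqref{B for k=1} summed against the mollifier, then a split into off-diagonal and diagonal solutions of $m_1n_1n_2\equiv\pm m_2$. Your off-diagonal count $M^2q^{1/2+\theta+\epsilon}$ is slightly sharper than the paper's $M^{5/2}q^{1/2+2\theta+\epsilon}$, and $\theta=1/4$ already meets every constraint, so no reselection is needed.

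\textbf{The gap} is the diagonal sum $\sum_{m_1r\le M}x_{m_1}x_{m_1r}\tau(r)/(m_1r)$. It carries the whole bound $O(\varphi^+(q)/(\alpha\log q))$, and you only predict its size. Your heuristic also misidentifies the mechanism:
\begin{itemize}
\item Since $\mu(m_1)\mu(m_1r)=\mu(r)$ for $m_1r$ squarefree, $\tau$ enters as $\mu(r)\tau(r)$. Its Dirichlet series behaves like $\zeta(1+s)^{-2}$, not $\zeta(1+s)^{2}$.
\item It is this $\zeta^{-2}$ that kills the double pole coming from the weight $\log(M/m_1r)$. A factor $\zeta^2$ there would produce a fourth-order pole and a bound growing in $\log M$.
\item Also $x_m\approx\mu(m)\log(M/m)/\log M$: the factor $m/\varphi(m)$ in \eqref{mollifier coefficients} cancels against the $\varphi(m)/m$ in the main term of the $\ell$-sum.
\end{itemize}

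\textbf{Error control is where a naive execution fails.} Proposition~\ref{prop: eval of G} only gives the error in $x_n$ as $O\bigl(\frac{n}{\varphi(n)}2^{\omega(n)}\sqrt{n/M}/\log M\bigr)$. Suppose you approximate the larger-index coefficient $x_{m_1r}$ and sum this error absolutely against $\tau(r)/(m_1r)$. The resulting bound is of order $\frac{1}{\sqrt M\log M}\sum_{n\le M}\mu^2(n)6^{\omega(n)}n^{-1/2}$, a positive power of $\log M$ rather than $O(1/\log M)$.

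The paper avoids this as follows:
\begin{itemize}
\item It inserts \eqref{mollifier coefficients} for $x_{mm_1}$ exactly. This cancels the weight $1/(mm_1)$ and creates a third variable $\ell$ with $mm_1\ell\le M$.
\item It applies Proposition~\ref{prop: eval of G} only to the $\ell_1$-sum inside $x_{m_1}$.
\item The main terms then reduce to $\sum_{n\le M}(f\star_1 g\star_1 g)(n)$ with $f(n)=\mu(n)\tau(n)/\varphi(n)$ and $g(n)=\mu^2(n)/\varphi(n)$. Since $f(p)+2g(p)=0$, one has $(f\star_1 g\star_1 g)(n)=\delta(n)$, so $S_1,S_2\ll\varphi^+(q)/\log M$.
\item The remainder carries $\sqrt{m_1/M}$ on the single variable $m_1$. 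It is $\ll\varphi^+(q)/\log^2 M$ because $\sum_{n\le M}a(n)\ll\sqrt M$.
\end{itemize}

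Your Mellin route can also be completed, but only under two conditions. First, write each $x_m$ exactly as a Perron integral of $\sum_{(\ell,mq)=1}\mu^2(\ell)\varphi(\ell)^{-1}\ell^{-s}$. This gives the series $\zeta(1+s_1)\zeta(1+s_1+s_2)/\zeta(1+s_2)$ times a holomorphic factor, integrated against $M^{s_1+s_2}/(s_1s_2)$. Second, shift contours into a zero-free region. Your proposal as written does neither.
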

The case $k=0$ is treated in \cite{IS99}.
Now, we twist by $\epsilon(\chi)^k$ for $k\neq 0$. Let's first consider the case of $|k|\geq2$. We want to evaluate
\begin{equation*}
    \cD(k):=\evensum \epsilon(\chi)^k|M(\chi)L(1/2,\chi)|^2=\sum_{m_1\leq M}\sum_{m_2\leq M}\frac{x_{m_1}x_{m_2}}{\sqrt{m_1m_2}}\cB(m_1,m_2,k).
\end{equation*}
Using the bounds on $\cB(m_1,m_2,k)$ from Theorem \ref{thm: second weighted moment} and the bound $|x_m|\leq 1$, we obtain the results in Proposition \ref{prop: second mollified moment} for $|k|\geq 2$.

The only moments that still need to be estimated are the mollified second moments twisted by $\epsilon(\chi)^{\pm1}$. Without loss of generality, we will consider the $k=1$ term, where $\cD(1)$ appears to have a nontrivial contribution coming from $\cB_1(m_1,m_2,1)$. Using \eqref{B for k=1}, we have
\begin{equation}\label{secondary sum}
\cD(1)=\varphi^+(q)\sum_{m_1\leq M}\sum_{m_2\leq M}\frac{x_{m_1}x_{m_2}}{\sqrt{m_1m_2}}\underset{m_1n_1n_2\equiv \pm m_2}{\sum_{n_1}\sum_{n_2}}\frac{1}{\sqrt{n_1n_2}}V\left(\frac{n_1X}{\sqrt{q}}\right)V\left(\frac{n_2X}{\sqrt{q}}\right)+O\left(q^{1-\theta/2+\alpha+\epsilon}\right),
\end{equation}
where $X=q^{-\theta}$.
The off-diagonal terms (where $m_1n_1n_2\neq m_2$) can crudely be bounded by $M^{5/2}q^{1/2+2\theta+\epsilon}$. Taking $\theta=7/40$ balances the two error terms. Decomposing \eqref{secondary sum} into diagonal and off-diagonal terms yields
\begin{align*}
\cD(1)=\varphi^+(q)\underset{mm_1\leq M}{\sum\sum}\frac{x_{m_1}x_{mm_1}}{mm_1}\tau(m)\left(1+O\left(\frac{m^{1/2}X^{1/2}}{q^{1/4}}\right)\right)+O(q^{229/240+\epsilon}).
\end{align*}
Using $|x_m|\leq 1$, the error term in the sum absorbs into $O(q^{229/240})$. For the remaining sum, we use the definition of $x_m$ from \eqref{mollifier coefficients}. This gives a main term of
\begin{equation*}
    S:=\frac{\varphi^+(q)}{G^2}\underset{\substack{mm_1\leq M\\ m,m_1,q\textrm{ pairwise coprime}}}{\sum\sum}\frac{\mu^2(m_1)\mu(m)m_1\tau(m)}{\varphi^2(m_1)\varphi(m)}\left(\sum_{\substack{\ell_1\leq M/m_1\\ (\ell_1,m_1q)=1}}\frac{\mu^2(\ell_1)}{\varphi(\ell_1)}\right)\left(\sum_{\substack{\ell_2\leq M/mm_1\\ (\ell_2,mm_1q)=1}}\frac{\mu^2(\ell_2)}{\varphi(\ell_2)}\right).
\end{equation*}
Using Proposition \ref{prop: eval of G} to evaluate the sum over $\ell_1$, we have
\begin{align*}
    S=\frac{\varphi^+(q)}{G^2}\underset{\substack{mm_1\ell\leq M\\ m,m_1,\ell,q\textrm{ pairwise coprime}}}{\sum\sum\sum}&\frac{\mu^2(m_1)\mu(m)m_1\tau(m)\mu^2(\ell)}{\varphi^2(m_1)\varphi(m)\varphi(\ell)}\frac{\varphi(m_1)\varphi(q)}{m_1 q}\\
    &\cdot\left(\log\frac{M}{m_1}+c+\sum_{p\mid q}\frac{\log p}{p}+\sum_{p\mid m_1}\frac{\log p}{p}+O\left(\frac{\theta(q)\sqrt{m_1}}{\sqrt{M}}\right)\right).
\end{align*}
Write $S=S_0+R$, where
\begin{align*}
    S_0=\frac{\varphi^+(q)}{G^2}\underset{\substack{mm_1\ell\leq M\\ m,m_1,\ell,q\textrm{ pairwise coprime}}}{\sum\sum\sum}&\frac{\mu^2(m_1)\mu(m)m_1\tau(m)\mu^2(\ell)}{\varphi^2(m_1)\varphi(m)\varphi(\ell)}\frac{\varphi(m_1)\varphi(q)}{m_1 q}\nonumber\\
    &\cdot\left(\log\frac{M}{m_1}+c+\sum_{p\mid q}\frac{\log p}{p}+\sum_{p\mid m_1}\frac{\log p}{p}\right),
\end{align*}
and
\begin{align}\label{eq: remainder term}
    R=\frac{\varphi^+(q)}{G^2}\underset{\substack{mm_1\ell\leq M\\ m,m_1,\ell,q\textrm{ pairwise coprime}}}{\sum\sum\sum}&\frac{\mu^2(m_1)\mu(m)m_1\tau(m)\mu^2(\ell)}{\varphi^2(m_1)\varphi(m)\varphi(\ell)}\frac{\varphi(m_1)\varphi(q)}{m_1 q}\cdot O\left(\frac{\theta(q)\sqrt{m_1}}{\sqrt{M}}\right).
\end{align}
Since $q$ is prime, $\sum_{p\mid q}\frac{\log p}{p}=\frac{\log q}{q}$.
Additionally, $m_1$ is forced to be squarefree, yielding
\begin{align}
    S_0=\frac{\varphi^+(q)\varphi(q)}{qG^2}\underset{\substack{mm_1\ell\leq M\\ m,m_1,\ell,q\textrm{ pairwise coprime}}}{\sum\sum\sum}&\frac{\mu^2(m_1)\mu(m)\tau(m)\mu^2(\ell)}{\varphi(m_1)\varphi(m)\varphi(\ell)}\label{eq: S_0}\\
    &\cdot\left(\log M+c+\frac{\log q}{q}+\sum_{p\mid m_1}\left(\frac{(1-p)\log p}{p}\right)\right).\nonumber
\end{align}
First consider the part of \eqref{eq: S_0} not involving the sum over $p\mid m_1$, which we will call $S_1$. We have
\begin{align*}
    S_1=\frac{\varphi^+(q)\varphi(q)}{qG^2}\left(\log M+c+\frac{\log q}{q}\right)\underset{\substack{mm_1\ell\leq M\\ m,m_1,\ell,q\textrm{ pairwise coprime}}}{\sum\sum\sum}&\frac{\mu^2(m_1)\mu(m)\tau(m)\mu^2(\ell)}{\varphi(m_1)\varphi(m)\varphi(\ell)}.
\end{align*}
The three variable sum in $S_1$ can be written as
\begin{equation*}
    \sum_{n\leq M}(f\star_1 g\star_1 g)(n),
\end{equation*}
where
\begin{align*}
    f(n)=\frac{\mu(n)\tau(n)}{\varphi(n)}\mathds{1}_{(n,q)=1},\quad g(n)=\frac{\mu^2(n)}{\varphi(n)}\mathds{1}_{(n,q)=1}.
\end{align*}
One can check that $(f\star_1 g\star_1 g)(n)=\delta(n)$, which implies that
\begin{equation*}
    S_1=\frac{\varphi^+(q)\varphi(q)}{qG^2}\left(\log M+c+\frac{\log q}{q}\right)\ll \frac{\varphi^+(q)}{\log M}.
\end{equation*}
We can rewrite the second part of \eqref{eq: S_0} as
\begin{equation*}
    S_2=\frac{\varphi^+(q)\varphi(q)}{qG^2}\sum_{p\leq M}\left(\frac{(1-p)\log p}{p}\right)\underset{\substack{mm_1\ell\leq M/p\\ m,m_1,\ell,q\textrm{ pairwise coprime}\\(m\ell q,p)=1}}{\sum\sum\sum}\frac{\mu^2(pm_1)\mu(m)\tau(m)\mu^2(\ell)}{\varphi(pm_1)\varphi(m)\varphi(\ell)}.
\end{equation*}
The factor $\mu^2(pm_1)$ enforces that $(p,m_1)=1$, so we have
\begin{equation*}
    S_2=\frac{\varphi^+(q)\varphi(q)}{qG^2}\sum_{p\leq M}\left(\frac{(1-p)\log p}{p(p-1)}\right)\underset{\substack{mm_1\ell\leq M/p\\ m,m_1,\ell,p,q\textrm{ pairwise coprime}}}{\sum\sum\sum}\frac{\mu^2(m_1)\mu(m)\tau(m)\mu^2(\ell)}{\varphi(m_1)\varphi(m)\varphi(\ell)}.
\end{equation*}
As before, the three variable sum can be viewed as a one-variable sum of a unitary convolution which evaluates to $\delta(n)$. Therefore, we get a bound
\begin{equation*}
    S_2=-\frac{\varphi^+(q)\varphi(q)}{qG^2}\sum_{p\leq M}\frac{\log p}{p}\ll\frac{\varphi^+(q)}{\log M},
\end{equation*}
using Mertens' theorem to handle the sum over primes. For the error term \eqref{eq: remainder term}, we have a bound of
\begin{equation*}
    R\ll\frac{\varphi^+(q)\theta(q)}{\sqrt{M}\log^2M}\sum_{n\leq M}a(n),
\end{equation*}
where $a(n)$ is a multiplicative function supported on squarefree integers satisfying
\begin{equation*}
    a(p)=\frac{\sqrt{p}+3}{p-1}.
\end{equation*}
We note that the partial sums of $a(n)$ are bounded by

    \begin{equation}\label{a(n) partial sums}
    \sum_{n\leq M}a(n)\ll\sqrt{M},
\end{equation}
obtained by applying Mellin inversion to a smoothed version of this sum.
With the bound \eqref{a(n) partial sums}, we get that the nontrivial contribution to the mollified second moment is given by
\begin{equation*}
    \cD_1(1)\ll \varphi^+(q)\left(\frac{1}{\log M}+\frac{\theta(q)}{(\log M)^{2}}\right).
\end{equation*}
For $q$ prime, $\theta(q)=2$, and thus $\cD_1(1)=O(\varphi^+(q)/\log M)$.

\subsection{Smoothed moments}
We can use the results in sections \ref{sec: mol1} and \ref{sec: mol2} to prove Theorem \ref{thm: smoothed mollified moments}, the mollified first and second moments weighted by a smooth function of the root angle, $\theta_\chi$. We may consider a family of functions $f_q:\R/\Z\to\R$, varying over $q$ prime. Let $A$ denote the exponent of the polynomial dependence on $|k|$ in Proposition \ref{prop: second mollified moment}, which we have shown is at most $18$. Fix an integer $J>\max\{2,A+1\}$, and suppose that conditions \eqref{f_q derivative condition} and \eqref{f_q condition 2} are satisfied.

Suppose that $f_q(x)$ has Fourier expansion
\begin{equation*}
    f_q(x)=\sum_{k\in\Z}c_{k,q}e(kx).
\end{equation*}
Then,
\begin{equation*}
    \cC:=\evensum f_q(\theta_\chi)M(\chi)L(1/2,\chi)=\sum_{k\in\Z}c_{k,q}\evensum M(\chi)L(1/2,\chi)\epsilon(\chi)^k.
\end{equation*}
The term $k=0$ gives the main term in Theorem \ref{thm: smoothed mollified moments} and condition \eqref{f_q condition 2} ensures that the $k=-1$ term is absorbed into $o\left(\varphi^+(q)\right)$. Then, for $k\neq0$, we have the bound on Fourier coefficients
\begin{equation}\label{eq: fourier coeff bound}
    |c_{k,q}|\ll_J \frac{1}{|k|^J}\|f_q^{(J)}\|
\end{equation}
by integration by parts. Since $J>2$, the sum over $k\neq0,-1$ converges, and due to condition \eqref{f_q derivative condition}, can be absorbed into $o\left(\varphi^+(q)\right)$.

For the second moment, the procedure is similar. Write
\begin{equation*}
    \cD=\evensum f_q(\theta_\chi)|M(\chi)L(1/2,\chi)|^2.
\end{equation*}
Just as in the first moment, the $k=0$ term gives the main term in Theorem \ref{thm: smoothed mollified moments} and condition \eqref{f_q condition 2} guarantees that the $k=\pm1$ terms absorb into $o\left(\varphi^+(q)\right)$. The sum over $|k|\geq 2$ is bounded by
\begin{equation*}
    \sum_{|k|\geq 2}|c_{k,q}|\left|\evensum \epsilon(\chi)^k\left|M(\chi)L(1/2,\chi)\right|^2\right|\ll q^{23/24+\alpha+\epsilon}\sum_{|k|\geq 2}|c_{k,q}|\cdot|k|^A,
\end{equation*}
by Proposition \ref{prop: second mollified moment}. Then by the bound \eqref{eq: fourier coeff bound} and condition \eqref{f_q derivative condition}, the sum over $k$ converges and absorbs into $o\left(\varphi^+(q)\right)$. This completes the proof of Theorem \ref{thm: smoothed mollified moments}.

\section{Non-Vanishing}
Now, we want to prove Theorem \ref{thm: positive proportion}, which requires that a positive proportion of even primitive Dirichlet $L$-functions with $\theta_\chi$ in some interval satisfy
    \begin{equation}\label{central point lb}
        |L(1/2,\chi)|>\delta\cdot(\log q)^{-1/2},
    \end{equation}
    for some $\delta>0$.
Let $I$ be an interval of positive length. To prove our non-vanishing result, we will take $f:\R/\Z\to\R$ to minorize the indicator function $\chi_I$.
Let $\cH_{\textrm{bad}}$ be the set of characters for which \eqref{central point lb} is not satisfied, and $\cH_{\textrm{good}}$ be the set for which \eqref{central point lb} is satisfied. Likewise, we split our sum $\cC=\cC_{\textrm{bad}}+\cC_{\textrm{good}}$.

\begin{align*}
    \cC_{\textrm{bad}}&\leq \delta\cdot(\log q)^{-1/2}\evensum |f(\theta_\chi)M(\chi)|\\
    &\leq\delta\cdot(\log q)^{-1/2}\left(\evensum |f(\theta_\chi)|^2\right)^{1/2}\left(\evensum |M(\chi)|^2\right)^{1/2}\\
    &\leq\delta\cdot(\log q)^{-1/2}\left(\varphi^+(q)\evensum |M(\chi)|^2\right)^{1/2}\leq 2\alpha\delta\varphi^+(q).
\end{align*}
By Theorem \ref{thm: smoothed mollified moments}, for $q$ sufficiently large (depending on $\alpha$ and $f$),
\begin{equation*}
\cC_\textrm{good}>(\|f\|-3\alpha\delta)\varphi^+(q).
\end{equation*}
But we also have that
\begin{equation*}
    |\cC_{\textrm{good}}|^2\leq \cD\underset{\chi\in\cH_{\textrm{good}}}{\evensum}f(\theta_\chi),
\end{equation*}
as long as $f\geq0$. Therefore, using the bounds
\begin{align}
    \underset{\chi\in\cH_{\textrm{good}}}{\evensum}f(\theta_\chi)&\geq\frac{|\cC_{\textrm{good}}|^2}{\cD}\label{eq: lower bound}\\
    &>\frac{\varphi^+(q)}{1+\alpha^{-1}}\frac{(\|f\|-3\alpha\delta)^2}{\|f\|(1+\alpha\delta)}\nonumber\\
    &>\frac{\varphi^+(q)}{1+\alpha^{-1}}(\|f\|-7\alpha\delta)\nonumber
\end{align}
for $q$ sufficiently large.
We now construct an appropriate bump function $f$ to obtain a positive proportion of non-vanishing. Let $g:[0,1]\to\R_{\geq0}$ be a smooth function satisfying $0\leq g(x)\leq 1$, $g(x)=g(1-x)$, $g(0)=0$, $g\left(\frac{1}{2}\right)=1$, and $g^{(j)}(0)=g^{(j)}\left(\frac{1}{2}\right)=0$ for all $j>0$. We then construct $g_\beta(x)$ for $\beta\leq 1$ by taking
\begin{align*}
    g_\beta(x)=
    \begin{cases}
        g\left(\frac{x}{\beta}\right),\quad &0\leq x<\frac{\beta}{2},\\
        1,\quad &\frac{\beta}{2}\leq x\leq 1-\frac{\beta}{2},\\
        g\left(\frac{1-x}{\beta}\right),\quad &1-\frac{\beta}{2}<x\leq 1.
    \end{cases}
\end{align*}
The function $g_\beta(x)$ is smooth and $\|g_\beta\|_1\geq 1-\beta$.
Now, let $I=(a,b)$ with $a,b\in\R$ and $b-a\leq1$ Define
\begin{align*}
    f_{\beta,I}(x)=
    \begin{cases}
        g_\alpha\left(\frac{x-a}{b-a}\right),\quad &a\leq x\leq b,\\
        0,\quad &b<x<a+1,
    \end{cases}
\end{align*}
and extend to a smooth function $f_{\beta,I}:\R/\Z\to\R$ by periodicity. This function has $L^1$ norm $\|f_{\beta,I}\|\geq (1-\beta)\mu(I)$ and minorizes the indicator function $\chi_I$.

To obtain a positive proportion of non-vanishing for shrinking intervals, we take $f_q=f_{\beta,I_q}$, where the interval depends on $q$. For the constant $J$ as in condition \eqref{f_q derivative condition}, we have that
\begin{equation*}
    \|f_{\beta,I_q}^{(J)}\|\ll_J \frac{1}{\left(\beta\cdot\mu(I_q)\right)^{J-1}},
\end{equation*}
and thus
\begin{equation*}
    \|f_{\beta,I_q}^{(J)}\|\left|\int_0^1f_{\beta,I_q}(x)\,dx\right|^{-1}\ll_{\beta,J}\frac{1}{\mu(I_q)^J}.
\end{equation*}
If $\mu(I_q)\gg q^{-\eta}$, where $\eta<\frac{1}{24J}$, we satisfy condition \eqref{f_q derivative condition} for $\alpha<\frac{1}{24}-J\eta$. Condition \eqref{f_q condition 2} is trivially satisfied by positivity of $f_{\beta,I_q}$.  We take $\delta=\mu(I_q)\epsilon$, $\alpha=(\frac{1}{24}-J\eta)(1-\frac{\epsilon}{3})$ and $\beta=\frac{\epsilon}{3}$. As discussed earlier in this section, we may take $J=20$. Applying these choices to \eqref{eq: lower bound} yields Theorem \ref{thm: positive proportion}.

\appendix

\section{Evaluation of an arithmetic sum}\label{section: sum}
In this section, we evaluate an asymptotic for the sum
\begin{equation*}
G=\sum_{\substack{n\leq M\\ (n,q)=1}}\frac{\mu^2(n)}{\varphi(n)},
\end{equation*}
which appears in the coefficients of the mollifier $M(\chi)$.
\begin{prop}\label{prop: eval of G}
    Let $q$ be a positive integer. Then,
    \begin{equation*}
        \sum_{\substack{n\leq M\\(n,q)=1}}\frac{\mu^2(n)}{\varphi(n)}=\frac{\varphi(q)}{q}\left(\log M+c+\sum_{p\mid q}\frac{\log p}{p}\right)+O\left(\frac{\theta(q)}{\sqrt{M}}\right),
    \end{equation*}
    where
    \begin{equation*}
        c=\gamma+\sum_p\frac{\log{p}}{p(p-1)}
    \end{equation*}
    is an absolute constant.
\end{prop}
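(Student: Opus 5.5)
We prove Proposition \ref{prop: eval of G} by writing $\mu^2(n)/\varphi(n)$ as a Dirichlet convolution of $1/n$ with a rapidly decaying multiplicative function, so the sum becomes a short average of harmonic sums.

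\emph{Step 1 (factorization).} Define $h$ by
\begin{equation*}
\frac{\mu^2(n)}{\varphi(n)}\mathds{1}_{(n,q)=1}=\sum_{d\mid n}\frac{h(d)}{n/d}.
\end{equation*}
Comparing Euler factors, $h$ is multiplicative with:
\begin{itemize}
\item for $p\mid q$: $h(p)=-1/p$ and $h(p^j)=0$ for $j\geq2$;
\item for $p\nmid q$: $h(p)=\frac{1}{p-1}-\frac1p=\frac{1}{p(p-1)}$, $h(p^2)=-\frac{1}{p(p-1)}$, and $h(p^j)=0$ for $j\geq3$.
\end{itemize}
In particular $h(d)\ll d^{-1}\cdot$(a factor supported on divisors of $q$), and $\sum_d |h(d)|d^{\sigma}$ converges for $\sigma<1/2$ on the part coprime to $q$.

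\emph{Step 2 (sum over $n$).} Interchange the sums:
\begin{equation*}
G=\sum_{d\leq M}h(d)\sum_{m\leq M/d}\frac1m=\sum_{d\leq M}h(d)\Big(\log\frac Md+\gamma+O\big(\tfrac dM\big)\Big).
\end{equation*}
Extend the $d$-sum to all $d$. The tail and the $O(d/M)$ terms together cost
\begin{equation*}
\sum_{d>M}|h(d)|\log d+\frac1M\sum_{d\leq M}d|h(d)|.
\end{equation*}
Bound both by Rankin's trick, using $|h(d)|\le d^{-1/2}\cdot(d/M)^{1/2}\cdot\ldots$ with exponent $1/2$. Each prime divisor of $q$ contributes a factor at most $2$ to the relevant Euler product, so the total is $O(\theta(q)M^{-1/2})$. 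Some care is needed with the $\log d$ factor in the tail; using exponent $1/2-\epsilon$ or absorbing the logarithm is enough, since only the shape $\theta(q)/\sqrt M$ is needed downstream.

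\emph{Step 3 (main term).} It remains to evaluate
\begin{equation*}
H(1)\,(\log M+\gamma)-\sum_d h(d)\log d=H(1)\Big(\log M+\gamma-\frac{H'}{H}(1)\Big),
\end{equation*}
where $H(s)=\sum_d h(d)d^{-s}$. Write
\begin{equation*}
H(s)=\prod_{p\mid q}(1-p^{-1-s})\prod_{p\nmid q}\Big(1+\frac{p^{-s}-p^{-2s}}{p(p-1)}\Big).
\end{equation*}
At $s=1$, the factor for $p\nmid q$ is $1+\frac{p-1}{p^3(p-1)}\cdot p=1$. Hence $H(1)=\prod_{p\mid q}(1-p^{-2})$, which does not match the claimed leading coefficient $\varphi(q)/q$ exactly. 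So the first thing to do is recheck the local factors: the Euler factor of the original function at $p\nmid q$ is $1+\frac{1}{(p-1)p^s}$, and dividing by $(1-p^{-1-s})^{-1}$ gives
\begin{equation*}
1+\frac{p^{-s}}{p-1}-p^{-1-s}-\frac{p^{-1-2s}}{p-1}.
\end{equation*}
At $s=1$ this equals $1$, consistent with the above. The factor for $p\mid q$ is $1-p^{-1-s}$, which at $s=1$ gives $1-p^{-2}$. Matching the stated leading coefficient $\varphi(q)/q$ instead requires the normalization $s=0$ for the harmonic part. So I expect the right bookkeeping is to convolve with $1$ rather than $1/m$: write the summand as $\frac1n\sum_{d\mid n}g(d)$ with $g(p)=\frac1{p-1}$, $g(p^2)=-\frac1{p-1}$ for $p\nmid q$, and $g(p)=-1$ for $p\mid q$.

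\emph{Step 3, corrected.} Then
\begin{equation*}
G=\sum_d\frac{g(d)}d\Big(\log\frac Md+\gamma+O\big(\tfrac dM\big)\Big).
\end{equation*}
With $F(s)=\sum_d g(d)d^{-1-s}$, the value $F(0)$ has local factor $1-\frac1p$ for $p\mid q$ and $1+\frac{1}{p(p-1)}-\frac{1}{p^2(p-1)}=1$ for $p\nmid q$. This gives $F(0)=\varphi(q)/q$, as required. The constant term is $-F'(0)/F(0)$. Logarithmic differentiation gives $\sum_{p\mid q}\frac{\log p}{p-1}$ from the primes dividing $q$, plus the unconditional sum $\sum_p\frac{\log p}{p(p-1)}$ from the primes not dividing $q$, corrected at $p\mid q$. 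This produces $c$ together with a $q$-dependent term. The main obstacle is this constant bookkeeping: checking that the corrections combine into exactly $\sum_{p\mid q}\frac{\log p}{p}$. For prime $q$ any discrepancy is $O(\log q/q)$ and harmless downstream. The error terms are handled by Rankin's trick with exponent $1/2$ as in Step 2.
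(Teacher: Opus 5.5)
Your main-term computation is right in substance, and it takes a more direct route than the paper. The genuine gap is in Step 2: as proposed, it does not prove the stated error $O(\theta(q)/\sqrt{M})$.

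\textbf{The error term.} With your correct Step 1 coefficients, $h(p^2)=-\frac{1}{p(p-1)}$. Hence $\sum_d |h(d)|d^{1/2}$ contains the divergent product $\prod_{p\nmid q}\bigl(1+\frac{1}{p-1}+\cdots\bigr)$, and likewise $\sum_d d|h(d)|\,d^{-\beta}$ converges only for $\beta>1/2$. So Rankin's trick is unavailable at exponent exactly $1/2$, both for the tail $\sum_{d>M}|h(d)|(\log(d/M)+\gamma)$ and for $M^{-1}\sum_{d\le M}d|h(d)|$. Exponent $1/2-\epsilon$ gives only $O_\epsilon(\theta(q)M^{-1/2+\epsilon})$, and even the optimal $\epsilon\asymp 1/\log M$ leaves $O(\theta(q)\log M/\sqrt{M})$. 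That is not ``the shape $\theta(q)/\sqrt M$''; it would survive downstream, but it is not the proposition.

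The missing step is to sum the square part exactly. Every $d$ in the support of $h$ is uniquely $d=eab^2$ with $e\mid q$ and $a,b$ squarefree, pairwise coprime and prime to $q$, and then $|h(eab^2)|=\frac{1}{e\,a\varphi(a)\,b\varphi(b)}$. Using $\sum_{b\le y}b/\varphi(b)\ll y$ and $\sum_a a^{-1/2}\varphi(a)^{-1}<\infty$,
\[
\sum_{d\le M}d|h(d)|\le\sum_{e\mid q}\mu^2(e)\sum_a\frac{1}{\varphi(a)}\sum_{b\le\sqrt{M/ea}}\frac{b}{\varphi(b)}\ll\sqrt{M}\prod_{p\mid q}\bigl(1+p^{-1/2}\bigr)\le\theta(q)\sqrt{M}.
\]
The tail is handled the same way, using $\sum_{b>y}\frac{1+\log(b/y)}{b\varphi(b)}\ll 1/y$; the range $ea>M$ contributes only $O(\theta(q)/M)$. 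This is exactly the estimate the paper hides in ``one can easily show'' for $\sum\mu^2(n)n/\varphi(n)$.

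\textbf{The main term.} This part has slips but closes exactly.
\begin{itemize}
\item You first evaluated $H$ at $s=1$. The main term $\sum_d h(d)(\log M+\gamma-\log d)$ equals $H(0)(\log M+\gamma)+H'(0)$, so the right point is $s=0$. Your ``corrected'' $F(s)=\sum_d g(d)d^{-1-s}$, with $g(d)=dh(d)$, is literally the same function $H$.
\item You transcribed $g(p^2)=-\frac{1}{p-1}$; it should be $-\frac{p}{p-1}$. Only then is the local factor $1+\frac{1}{p(p-1)}-\frac{1}{p(p-1)}=1$; your displayed expression equals $1+p^{-2}$.
\item The constant is $+F'(0)/F(0)$, because $-\sum_d g(d)d^{-1}\log d=F'(0)$.
\item The bookkeeping you left open is one line: $\frac{\log p}{p-1}-\frac{\log p}{p(p-1)}=\frac{\log p}{p}$. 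Hence $F'(0)/F(0)=\sum_p\frac{\log p}{p(p-1)}+\sum_{p\mid q}\frac{\log p}{p}$ exactly, for every $q$, and no $O(\log q/q)$ hedge is needed.
\end{itemize}

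\textbf{Comparison with the paper.} The paper starts from $\sum_{n\le M,(n,q)=1}\mu^2(n)n/\varphi(n)=\frac{\varphi(q)}{q}M+\Delta(M)$ and passes to $1/\varphi(n)$ by partial summation. It then finds $\int_1^\infty\Delta(t)t^{-2}\,dt$ by extending the Mellin identity for $\Delta$ from $\Re s>1$ to $s=1$ and reading off the constant term of $\zeta(s)h(s)\rho_q(s)/s$. Your convolution of $h$ with $1/m$, together with $\sum_{m\le x}1/m=\log x+\gamma+O(1/x)$, produces the constant directly, with no Mellin step. Both routes rest on the same $\sqrt{M}$-type input, and that is where the $eab^2$ splitting cannot be avoided.
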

Our evaluation of this sum follows the approach of R. Sitaramachandra Rao \cite{Sit85} who considers a similar sum without the coprimality restriction.
One can easily show that
    \begin{equation*}
    \sum_{\substack{n\leq M\\ (n,q)=1}}\mu^2(n)\frac{n}{\varphi(n)}=\frac{\varphi(q)}{q}M+O(\sqrt{M}\theta(q)),
    \end{equation*}
where $\theta(q)=2^{\omega(q)}$.
If we express this sum as 
\begin{equation*}
    \sum_{\substack{n\leq M\\ (n,q)=1}}\mu^2(n)\frac{n}{\varphi(n)}=\frac{\varphi(q)}{q}M+\Delta(M),
\end{equation*}
where $\Delta(M)=O(\sqrt{M}\theta(q))$,
then by partial summation we have that
\begin{equation*}
    \sum_{{ (n,q)=1}}\frac{\mu^2(n)n}{\varphi(n)}\frac{1}{n^s}=\frac{\varphi(q)}{q}\frac{s}{s-1}+s\int_1^\infty \frac{\Delta(t)}{t^{s+1}}\,dt.
\end{equation*}
By Euler's product theorem, we also have that
\begin{align*}
    \sum_{(n,q)=1}\frac{\mu^2(n)n}{\varphi(n)}\frac{1}{n^s}&=\prod_p\left(1+\frac{1}{(p-1)p^{s-1}}\right)\prod_{p\mid q}\left(1+\frac{1}{(p-1)p^{s-1}}\right)^{-1}\\
    &=\zeta(s)h(s)\rho_q(s),
\end{align*}
where
\begin{equation*}
    h(s)=\prod_p\left(1+\frac{1}{(p-1)p^s}-\frac{1}{(p-1)p^{2s-1}}\right)
\end{equation*} and
\begin{equation*}
    \rho_q(s)=\prod_{p\mid q}\left(1+\frac{1}{(p-1)p^{s-1}}\right)^{-1}.
\end{equation*}
Equating these two expressions for the sum, we get that
\begin{equation*}
    \int_1^\infty \frac{\Delta(t)}{t^{s+1}}\,dt=-\frac{\varphi(q)}{q}\frac{1}{s-1}+\frac{1}{s}\zeta(s)h(s)\rho_q(s).
\end{equation*}
The product $h(s)$ converges and is holomorphic for $\Re(s)>1/2$, with $h(1)=1$, and $\rho_q(1)=\frac{\varphi(q)}{q}$. Therefore, the poles at $s=1$ cancel out and we have a holomorphic function for $\Re(s)>1/2$. We can evaluate the value at $s=1$ by looking at the constant term of the Laurent expansion of $\frac{1}{s}\zeta(s)h(s)\rho_q(s)$,
\begin{align*}
    \frac{1}{s}\zeta(s)h(s)\rho_q(s)&=\left(1-(s-1)+\cdots\right)\times\frac{1}{s-1}\left(1+\gamma(s-1)+\cdots\right)\times\left(1+\sum_p\frac{\log p}{p(p-1)}(s-1)+\cdots\right)\\&\times\frac{\varphi(q)}{q}\left(1+\sum_{p\mid q}\frac{\log p}{p}(s-1)+\cdots\right).
\end{align*}
We thus get that
\begin{equation*}
    \int_1^\infty \frac{\Delta(t)}{t^{s+1}}\,dt=\frac{\varphi(q)}{q}\left(-1+\gamma+\sum_p\frac{\log p}{p(p-1)}+\sum_{p\mid q}\frac{\log p}{p}\right)
\end{equation*}
Now, by partial summation,
\begin{align*}
    \sum_{\substack{n\leq M\\(n,q)=1}}\frac{\mu^2(n)}{\varphi(n)}&=\left(\frac{\varphi(q)}{q}M+\Delta(M)\right)\frac{1}{M}+\frac{\varphi(q)}{q}\log M+\int_1^M\frac{\Delta(t)}{t^2}\,dt\\
    &=\frac{\varphi(q)}{q}\left(\log M+\gamma+\sum_p\frac{\log p}{p(p-1)}+\sum_{p\mid q}\frac{\log p}{p}\right)+O\left(\frac{\Delta(M)}{M}+\int_M^\infty \frac{\Delta(t)}{t^2}\right)\\
    &=\frac{\varphi(q)}{q}\left(\log M+\gamma+\sum_p\frac{\log p}{p(p-1)}+\sum_{p\mid q}\frac{\log p}{p}\right)+O\left(\frac{\theta(q)}{\sqrt{M}}\right).
\end{align*}
This completes the proof of Proposition \ref{prop: eval of G}.

\bibliographystyle{alpha}
\bibliography{refs}

\end{document}

%% file: preamble.tex
\usepackage{amssymb,amsfonts,amsmath}
\usepackage[usenames,dvipsnames]{color}
\usepackage{amssymb}
\usepackage[normalem]{ulem}
\usepackage{url}
\usepackage[all,arc,2cell]{xy}
\UseAllTwocells
\usepackage{enumerate}
\usepackage{amsthm}
\usepackage{quiver}
\usepackage[letterpaper,margin=1in]{geometry}
\usepackage{bbm}
\usepackage{dsfont}
\usepackage{mathrsfs}
\usepackage{hyperref}  
\hypersetup{%
  bookmarksnumbered=true,%
  colorlinks=true,%
  linkcolor=blue,%
  citecolor=blue,%
  filecolor=blue,%
  menucolor=blue,%
  urlcolor=blue,%
  pdfnewwindow=true,%
  pdfstartview=FitBH}   

\def\makeautorefname#1#2{\expandafter\def\csname#1autorefname\endcsname{#2}}

\def\equationautorefname~#1\null{(#1)\null}
\makeautorefname{footnote}{footnote}%
\makeautorefname{item}{item}%
\makeautorefname{figure}{Figure}%
\makeautorefname{table}{Table}%
\makeautorefname{part}{Part}%
\makeautorefname{appendix}{Appendix}%
\makeautorefname{chapter}{Chapter}%
\makeautorefname{section}{Section}%
\makeautorefname{subsection}{Section}%
\makeautorefname{subsubsection}{Section}%
\makeautorefname{theorem}{Theorem}%
\makeautorefname{thm}{Theorem}%
\makeautorefname{cor}{Corollary}%
\makeautorefname{lem}{Lemma}%
\makeautorefname{prop}{Proposition}%
\makeautorefname{pro}{Property}
\makeautorefname{conj}{Conjecture}%
\makeautorefname{defn}{Definition}%
\makeautorefname{notn}{Notation}
\makeautorefname{notns}{Notations}
\makeautorefname{rmk}{Remark}%
\makeautorefname{quest}{Question}%
\makeautorefname{exmp}{Example}%
\makeautorefname{exer}{Exercise}%
\makeautorefname{ax}{Axiom}%
\makeautorefname{claim}{Claim}%
\makeautorefname{ass}{Assumption}%
\makeautorefname{asss}{Assumptions}%
\makeautorefname{con}{Construction}%
\makeautorefname{prob}{Problem}%
\makeautorefname{warn}{Warning}%
\makeautorefname{obs}{Observation}%
\makeautorefname{conv}{Convention}%

\newtheorem{thm}{Theorem}[section]

\newtheorem{prop}[thm]{Proposition}
\newtheorem{lem}[thm]{Lemma}

\theoremstyle{definition}

\newtheorem{rmk}[thm]{Remark}

\makeatletter
\let\c@obs=\c@thm
\let\c@cor=\c@thm
\let\c@prop=\c@thm
\let\c@lem=\c@thm
\let\c@prob=\c@thm
\let\c@con=\c@thm
\let\c@conj=\c@thm
\let\c@def=\c@thm
\let\c@notn=\c@thm
\let\c@notns=\c@thm
\let\c@exmp=\c@thm
\let\c@ax=\c@thm
\let\c@pro=\c@thm
\let\c@ass=\c@thm
\let\c@warn=\c@thm
\let\c@rmk=\c@thm
\let\c@sch=\c@thm
\numberwithin{equation}{section}
\makeatother

\newcommand{\ol}{\overline}

\newcommand{\cA}{\mathcal{A}}
\newcommand{\cB}{\mathcal{B}}
\newcommand{\cC}{\mathcal{C}}
\newcommand{\cD}{\mathcal{D}}
\newcommand{\cF}{\mathcal{F}}

\newcommand{\cH}{\mathcal{H}}

\newcommand{\R}{\mathbb{R}}

\newcommand{\C}{\mathbb{C}}
\newcommand{\Z}{\mathbb{Z}}

\newcommand{\Kl}{\mathrm{Kl}}
\newcommand{\cond}{\mathrm{cond}}

\newcommand{\shortmod}{\ensuremath{\negthickspace \negthickspace \negthickspace \pmod}}

\NewDocumentCommand{\rsum}{ O{\null} O{\null} m }{\sum_{#1}^{#2}{{\vphantom{\sum}}^{\negthickspace\negthickspace\negthickspace#3}}}
\NewDocumentCommand{\rprod}{ O{\null} O{\null} m }{\sum_{#1}^{#2}{{\vphantom{\prod}}^{#3}}}

\newcommand{\primsum}{\sum_{\chi\shortmod{q}}{\vphantom{\sum}}^{\negthickspace \negthickspace \negthickspace \negthickspace \negthickspace *}}

\newcommand{\primsumempty}{\sum{\vphantom{\sum}}^{\negthickspace *}}

\newcommand{\evensumrest}{\sum_{\substack{\chi\shortmod{q}\\ \theta_\chi\in I}}{\vphantom{\sum}}^{\negthickspace \negthickspace \negthickspace \negthickspace \negthickspace +}}

\newcommand{\evensum}{\sum_{\chi\shortmod{q}}{\vphantom{\sum}}^{\negthickspace \negthickspace \negthickspace \negthickspace \negthickspace +}}

\renewcommand{\Re}{\mathrm{Re}}